\documentclass[11pt]{amsart}
\usepackage[dvips]{graphicx}
\usepackage{amssymb,amsmath,color, verbatim}
\usepackage{epic}
\usepackage{pstricks}
\usepackage{pst-node, mathrsfs}
\usepackage{url}

\usepackage[utf8]{inputenc}
\usepackage{pstricks-add}

\textwidth=15.5cm
\tolerance = 10000
\oddsidemargin 1cm \evensidemargin 1cm \textheight=21truecm
\topmargin=-0.5truecm
\parskip 2mm
\unitlength=1cm

\def\BBox{\kern  -0.2cm\hbox{\vrule width 0.2cm height 0.2cm}}

\newtheorem{lemma}{Lemma}[section]

\newtheorem{theorem}{Theorem}[section]

\newtheorem{remark}{Remark}[section]
\newtheorem{conjecture}[theorem]{Conjecture}

\title{Edge-girth-regular graphs arising from biaffine planes and Suzuki groups}

\author[Gabriela Araujo-Pardo]{Gabriela Araujo-Pardo}
\address{Gabriela Araujo-Pardo, Instituto de Matem\'aticas-Campus Juriquilla, Universidad Nacional Aut\'onoma de M\'exico, C.P. 076230, Boulevard Juriquilla \#3001, Juriquilla, Qro., México.}
\email{garaujo@matem.unam.mx}
\thanks{Research supported by PASPA-DGAPA-UNAM- M{\'exico} and CONACyT-{M\'exico} during a Sabbatical Stance in 2020, by CONACyT-M{\' e}xico under Project 282280 and PAPIIT-M{\' e}xico under Project IN101821.}

\author[Dimitri Leemans]{Dimitri Leemans}
\address{Dimitri Leemans, D\'epartement de Math\'ematique, Universit\'e libre de Bruxelles, C.P.216, Boulevard du Triomphe, 1050 Brussels, Belgium}
\email{Lemans.Dimitri@ulb.be}

\begin{document}

\date{}

\begin{abstract}
An edge-girth-regular graph $egr(v,k,g,\lambda)$, is a $k$-regular graph of order $v$, girth $g$ and with the property that each of its edges is contained in exactly $\lambda$ distinct $g$-cycles. An $egr(v,k,g,\lambda)$ is called extremal for the triple $(k,g,\lambda)$ if $v$ is the smallest order of any $egr(v,k,g,\lambda)$. 

In this paper, we introduce two families of edge-girth-regular graphs. The first one is a family of extremal $egr(2q^2,q,6,(q-1)^2(q-2))$ for any prime power $q\geq 3$ and, the second one is a family of $egr(q(q^2+1),q,5,\lambda)$ for $\lambda\geq q-1$ and $q\geq 8$ an odd power of $2$. In particular, if $q=8$ we have that $\lambda=q-1$.
Finally, we construct an $egr(32,5,5,12)$ and we prove that it is extremal.
\end{abstract}
\keywords{cages, edge-girth-regular graphs, extremal edge-girth-regular graphs, Suzuki simple groups, incidence geometry} 
\subjclass[2000]{05E30, 05B30, 51E24, 20D08}

\maketitle

\section{Introduction}

This paper deals with simple graphs, i.e. graphs with no loops and no multiple edges.

A graph $\mathcal G$ is {\em $k$-regular} if each of its vertices has exactly $k$ neighbours. It is of {\em girth} $g$ if its smallest cycles have $g$ vertices. The number of vertices of $\mathcal G$ is also called the {\em order} of $\mathcal G$.

In this paper we study a family of graphs called {\it{edge-girth-regular graphs}} that were first defined in \cite{JKM18}. These graphs are $k$-regular, of order $v$ and girth $g$. Moreover, they are such that each of their edges is contained in $\lambda$ distinct $g$-cycles (cycles of length $g$). They are denoted by $egr(v,k,g,\lambda)$ .

In the latter paper, the authors expose that their motivation is based on the fact that Moore cages (that are $k$-regular graphs of fixed girth $g$ having their number of vertices matching the Moore bound, for more information of this topic see \cite{EJ13}) are edge-girth-regular graphs. 

Throughout this paper, we use the natural lower bound for the order of a cage, called {\it {Moore's lower bound}} and denoted by $n_0(k,g)$. It is obtained by counting the vertices of a tree, ${\mathcal{T}}_{(g-1)/2}$, rooted on a vertex and with radius $(g-1)/2$, if $g$ is odd; or the vertices of a ``double-tree'' rooted at an edge (that is, two different ${\mathcal{T}}_{(g-3)/2}$ trees rooted each one at the vertices incident with an edge) if $g$ is even (see \cite{EJ13}):

  \begin{equation} n_0(k,g) = \left\{ \begin{array}{ll} 1 + \sum_{i=0}^{\frac{g-3}{2}} k(k-1)^{i} = \frac{k(k-1)^{\frac{g-1}{2}}-2}{k-2}, &\mbox{ if $g$ is odd};\\
2\sum_{i=0}^{\frac{g-2}{2}}(k-1)^{i}= \frac{2(k-1)^{\frac{g}{2}}-2}{k-2}, &\mbox{ if $g$ is
even}.\end{array}\right.\end{equation}

Moreover, the well-known {\emph{Cage Problem}} consists in finding $k$-regular graphs of fixed girth $g$ and minimum order, denoted by $n(k,g)$, that we call $(k;g)$-cages.

If $\mathcal G$ is a $(k;g)$-cage of order $n_0(k;g)$, it is a Moore Cage. If $\mathcal G$ is a $(k;g)$-graph of order $|V(\mathcal G)|$ then we call the {\emph{excess}} of $\mathcal G$ the difference $|V(\mathcal G)|-n_0(k;g)$. 
It is well known (see \cite{EJ13}) that there are very few graphs that attain the Moore Bound and the existence of such graphs is related with the existence of some specific finite geometries or generalized quadrangles (see \cite{BBR18,BI80,EJ13}). Consequently, the main challenge of this problem is not only to find small graphs, but also to prove that they are the smallest that exist and have order $n(k;g)$, or in other words that they are cages. 

In \cite{JKM18} the authors introduce the concept of edge-girth-regular graphs and give results about their structure. One of them implies the relationship between these graphs and the Moore cages. They analyze the $egr(v,3,g,\lambda)$-graphs for $3\leq g\leq 6$ and prove that $K_4$ is the unique $egr(4,3,3,2)$-graph, $K_{3,3}$ the unique $egr(6,3,4,4)$-graph and the cube $Q_3$ the unique $egr(4,3,4,2)$-graph. For girth $g=5$ they also prove that the Petersen graph\footnote{The Petersen graph is the cubic Moore cage of girth $5$.} and the skeleton graph of the dodecahedron are the unique $egr(10,3,5,4)$-graph and $egr(20,3,5,2)$-graph respectively; and, for girth six they give four $egr(v,3,6,\lambda)$-graphs, namely the Heawood graph which is the unique $egr(14,3,6,8)$-graph that is the cubic Moore Cage of girth six, the M\"{o}ebius graph that is an $egr(16,3,6,6)$-graph, and two $egr(v,3,6,4)$-graphs, the Pappus graph (which is the incidence graph of Pappus configuration) and Desargues graph (which is the incidence graph of Desargues configuration). The Pappus graph has order $18$ and the Desargues graph has order $20$. 
Also in \cite{JKM18} the authors give several constructions that produce infinite families of $egr$-graphs, as well as a general result of edge-girth-regular graphs for $\lambda=2$ and any regularity and girth, related with topological graph theory. 

Recently, Drglin, Filipovski, Jajcay and Raiman \cite{DFJR21} introduced a special class of these graphs called {\it{extremal edge-girth-regular graphs}}. These are $egr(v,k,g,\lambda)$ with $v=n(k,g,\lambda)$ where $n(k,g,\lambda)$ is the smallest order of a $(v,k,g,\lambda)$-graph fixing the triplet $(k,g,\lambda)$ (see~\cite{DFJR21}). As usual if we have an $egr(v,k,g,\lambda)$-graph $\mathcal G$, the {\emph{excess}} of $\mathcal G$ is $|V(\mathcal G)|$-$n(k,g,\lambda)$.

In the same paper, the authors provide lower bounds for $n(k,g,\lambda)$ depending of the parity of $g$ and related with the Moore bound for $k$-regular cages of girth $g$, denoted by $n_0(k,g)$. Specifically, they improve the lower bound of $n(k,g,\lambda)$ for bipartite graphs. Moreover, they give results for some special classes of extremal edge-girth-regular graphs with $\lambda=1$, called \emph{girth-tight} in \cite{PW07}, and some constructions though the concept of \emph{canonical double cover} used on topological graph theory, especially in order to obtain extremal edge-girth-regular graphs of girth $4$.

The purpose of our paper is to introduce new techniques to construct infinite families of $(v,k,g,\lambda)$-graphs for some specific values of $k\geq 4$, $g=\{5,6\}$ and $\lambda \geq 3$ (which do not appear in~\cite{JKM18,DFJR21}) and prove that some of these families are extremal. 

The paper is organized as follows.
In Section~\ref{egrgfg} we introduce a family of graphs given in \cite{B67} and we prove that, for $q$ a prime power they are $egr(2q^2,q,6,(q-1)^2(q-2))$.
Moreover, using the algebraic properties of the incidence graph of a projective plane and the graph described above for $q=4$, we construct an specific $(32,5,5,12)$-graph. 
In Section~\ref{egrgsg} we introduce a family of  $egr(q(q^2+1),q,5,\lambda)$-graphs with $\lambda\geq q-1$ and $q=2^{2\alpha+1}$ and $\alpha$ a strictly positive integer. In particular, if $q=8$ we have that $\lambda=q-1$; these graphs arise from the Suzuki simple groups.
Finally, in Section~\ref{extremalegrg} we introduce the bounds given in~\cite{DFJR21} and we prove that the graphs given in Section~\ref{sub1} and the graph constructed in Section~\ref{sub2} are extremal, in particular that $n(q,6,(q-1)^2(q-2))=2q^2$ and that $n(5,5,12)=32$. In particular, if we let $q=3$ in the family of $egr(2q^2,q,6,(q-1)^2(q-2))$-graphs, we obtain the Pappus graph that was mentioned in~\cite{DFJR21} to be extremal.

\section{Edge-girth-regular graphs from Moore cages of girth 6}\label{egrgfg}

Let $q$ be a prime power and consider the Levi graph or incidence graph $\mathcal B_q$ of elliptic semiplanes of type C that is obtained from the projective plane of order $q$ by choosing an incident point-line pair $(p,l)$ and deleting all the lines incident with $p$ and all the points belonging to $l$. Thus, the Levi (or incidence) graph
$\mathcal B_q$ is bipartite, $q$-regular and has $2q^2$ vertices, which correspond in the elliptic semiplane to $q^2$ points and $q^2$ lines 
both partitioned into $q$ parallel classes or blocks of $q$ elements each. It is 
 also known as the biaffine plane (see \cite{AABB17,B67}). The following description of $\mathcal B_q$ appears in \cite{ABMM16} and it was used to construct families of graphs and mixed graphs of girth $5$ or diameter $2$ (see \cite{AABB17,AABL12,ABMM16,ADG20}).
\\
The following Remark \ref{remark1} describes the biaffine plane and its incidence graph. 

\begin{remark}\label{remark1}\cite{ABMM16}.
Let $\mathbb{F}_q$ be the finite field of order $q$ with $q$ a power of a prime.
\begin{enumerate}
\item[(i)] Let $\mathscr{L} =\mathbb{F}_q \times \mathbb{F}_q$ and $\mathscr{P} =\mathbb{F}_q \times \mathbb{F}_q$. Denote the elements of $\mathscr{L}$ and $\mathscr{P}$ using ``brackets'' and ``parenthesis'', respectively.
The following set of $q^2$ lines define a biaffine plane:

\begin{equation}
[m, b] = \{(x, mx + b) : x \in \mathbb{F}_q\} \  for \ all\ m, b \in \mathbb{F}_q.
\end{equation}

\item[(ii)] The incidence graph of the biaffine plane is a bipartite graph $\mathcal B_q = (\mathscr{P},\mathscr{L})$ which is $q$-regular, has order $2q^2$, diameter 4 and girth 6, if $q \geq 3$; and girth 8, if $q=2$.

\item[(iii)] The vertices mutually at distance 4 are the elements of the sets $L_m = \{[m, b] : b \in \mathbb{F}_q\}$, and $P_x = \{(x, y) : y \in \mathbb{F}_q\}$ for all $x,m \in \mathbb{F}_q$.
\end{enumerate}
\end{remark}

\subsection{A family of $egr(2q^2,q,6,(q-1)^2(q-2))$-graphs for $q$ a prime power}\label{sub1}
In the following Theorem we prove that $\mathcal B_q$ is an edge-girth-regular graph. 
\begin{theorem}\label{B_q} For any $q\geq 3$, $q$ a prime power, $\mathcal B_q$ is an $egr(2q^2, q,6,(q-1)^2(q-2))$.
\end{theorem}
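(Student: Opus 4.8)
The plan is to work directly with the coordinate description of $\mathcal B_q$ given in Remark~\ref{remark1}: vertices are points $(x,y)$ and lines $[m,b]$ with $x,y,m,b \in \mathbb{F}_q$, and $(x,y)$ is incident with $[m,b]$ iff $y = mx+b$. Since the graph is vertex- and edge-transitive under the obvious group action (translations $(x,y)\mapsto(x,y+c)$, $[m,b]\mapsto[m,b+c]$; the maps $x\mapsto x+a$; and scalings), it suffices to count the $6$-cycles through one fixed edge, say the edge joining $(0,0)$ and $[0,0]$. A $6$-cycle through this edge has the form
\begin{equation*}
(0,0) - [0,0] - (x_1,0) - [m_1,b_1] - (x_2,y_2) - [0, y_2] - (0,0),
\end{equation*}
where I have used that both lines through $(0,0)$ in the cycle other than along the fixed edge must have the form $[m,0]$, and both points on $[0,0]$ other than $(0,0)$ have the form $(x,0)$; so the cycle is determined by choosing the "middle" point $(x_2,y_2)$ and the "middle" line $[m_1,b_1]$, subject to a handful of incidence and distinctness constraints. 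Thus the first step is to set up these equations precisely and reduce the count to counting solutions of a small polynomial system over $\mathbb{F}_q$.

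Next I would solve the system. Writing the cycle as $v_0=(0,0)$, $\ell_0=[0,0]$, $v_1=(x_1,0)$, $\ell_1=[m_1,b_1]$, $v_2=(x_2,y_2)$, $\ell_2=[m_2,b_2]$, the incidences force $b_2 = y_2$ (since $\ell_2 \ni v_0 = (0,0)$), $m_2 x_2 = 0$ is \emph{not} required --- rather $y_2 = m_2\cdot 0 + b_2$ so $b_2=y_2$, and $\ell_2 \ni v_2$ automatically; meanwhile $\ell_1 \ni v_1$ gives $0 = m_1 x_1 + b_1$, and $\ell_1 \ni v_2$ gives $y_2 = m_1 x_2 + b_1$. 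Combining, $y_2 = m_1(x_2 - x_1)$, and $\ell_2 \ni v_1$ must \emph{fail} (else we'd have a $4$-cycle), etc. So a $6$-cycle is parametrized by a triple essentially amounting to: a nonzero $x_1$, a nonzero $m_1$ (the slope of $\ell_1$ must differ from $0$ or we get a short cycle), and the free value $y_2 = m_1(x_2-x_1)$ with $x_2 \neq x_1$, $x_2 \neq 0$; one then checks all six vertices are distinct and girth is not violated. Counting the admissible triples: roughly $(q-1)$ choices for $x_1$, $(q-1)$ for the slope $m_1$, and $(q-2)$ for $x_2$ (excluding $x_2 = 0$ and $x_2 = x_1$), which yields $(q-1)^2(q-2)$ --- matching the claimed $\lambda$. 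I would verify carefully that each such triple gives a genuine $6$-cycle (girth $6$, already known from the Remark, guarantees no shorter cycle closes) and that distinct triples give distinct $6$-cycles through the edge, and that no $6$-cycle has been missed.

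Finally I would note that $\mathcal B_q$ is $q$-regular of order $2q^2$ with girth $6$ by Remark~\ref{remark1}, and that the edge-transitivity of $\mathcal B_q$ (any incident point-line pair can be mapped to $((0,0),[0,0])$ by an affine map of the form $(x,y)\mapsto(\alpha x+\beta,\ \gamma y + \delta x + \epsilon)$ with suitable constants, composed appropriately — this group acts transitively on flags of the biaffine plane) promotes the count through one fixed edge to all edges, establishing that $\mathcal B_q$ is an $egr(2q^2,q,6,(q-1)^2(q-2))$.

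The main obstacle I anticipate is twofold: first, pinning down exactly which "short-cycle-avoidance" inequalities are needed so that the count of admissible parameter triples is clean (it is easy to over- or under-count by mishandling the cases where two of the cycle's vertices or lines coincide, or where a chord would create a $4$-cycle); and second, verifying edge-transitivity rigorously, i.e.\ exhibiting enough automorphisms of $\mathcal B_q$ to move an arbitrary flag to the base flag — since $\mathcal B_q$ is \emph{not} the full incidence graph of the projective plane, one must check these affine-type maps genuinely preserve the biaffine incidence structure. Once edge-transitivity is in hand, the counting is a finite, essentially elementary computation over $\mathbb{F}_q$.
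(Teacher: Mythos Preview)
Your approach is essentially the paper's: both count $6$-cycles through a given edge by walking out a path $(u_1,u_2,u_3,u_4,u_5)$ with $(q-1)$, $(q-1)$, and $(q-2)$ choices at the successive steps, after which the cycle closes uniquely. The one difference is that the paper does \emph{not} invoke edge-transitivity; it works directly with an arbitrary edge $u_1u_2$ (with $u_1\in P_x$, $u_2\in L_m$) and uses only item (iii) of Remark~\ref{remark1}---vertices in the same block $P_x$ are at mutual distance $4$---to see why $u_5$ must avoid $P_x$ as well as $P_{x'}$. This sidesteps the second obstacle you flagged entirely.

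One small slip in your coordinate setup: the line $\ell_2$ through $v_0=(0,0)$ satisfies $0 = m_2\cdot 0 + b_2$, so $b_2=0$, not $b_2=y_2$; thus $\ell_2=[m_2,0]$ with $m_2 = y_2/x_2$, and your displayed cycle should end with $[m_2,0]$ rather than $[0,y_2]$. With that corrected (and noting that the constraint $x_2\neq 0$ arises precisely because two distinct points in $P_0$ cannot lie on a common line), your count $(q-1)^2(q-2)$ goes through exactly as you outline.
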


\begin{proof}
The order, degree and girth are given in Remark \ref{remark1}, hence we only have to prove that any edge is in exactly $(q-1)^2(q-2)$ $6$-cycles. 
Let $u_1u_2$ be any edge in $\mathcal B_q$, then we can suppose, without loss of generality that $u_1\in P_x$ and $u_2\in L_m$, for some $x,m\in \mathbb{F}_q$. We then have $(q-1)$ possible choices for a neighbour $u_3\neq u_1$ of $u_2$, one in each $P_x'$ for $x'\in \mathbb{F}_q$ and $x'\not= x$. Again, to select a neighbour $u_4$ of $u_3$ that is not $u_2$, we have $(q-1)$ options, one in each $L_m'$ for $m'\in \mathbb{F}_q$ and $m'\not= m$.

Until here, we have a path $(u_1,u_2,u_3,u_4)$. Finally if we select a new neighbor $u_5\neq u_3$ of $u_4$ in $P_x$, as $u_1\in P_x$, by item (iii) of Remark \ref{remark1} we have that then $d(u_1,u_5)=4$ and it is impossible than we close this path as a $6$-cycle. Hence we have only $(q-2)$ options to select $u_5$ in $P_{x''}$, with $x''\in \mathbb{F}_q$ and $x''\not= x,x'$. Then we have a path $(u_1,u_2,u_3,u_4,u_5)$ of length four and there is a unique way to close this path in a 6-cycle, namely by taking
$u_6\in L_t$ such that, if $u_1=(x,y)$ and $u_5=(x'',y'')$,  $t=\frac{y''-y}{x''-x}$.
Finally, $C=(u_1,u_2,u_3,u_4,u_5,u_6)$ is a $6$-cycle of $\mathcal B_q$. 

In total, we have $(q-1)^2(q-2)$ cycles of length $6$ in $\mathcal B_q$ that contain the edge $u_1u_2$.
\end{proof}

\subsection{The special graph $egr(32,5,5,12)$.}\label{sub2}

In the following theorem, we construct a special edge-girth-regular graph using $\mathcal B_4$.  

\begin{theorem}\label{thm:egrcage}
There exists an $egr(32,5,5,12)$.
\end{theorem}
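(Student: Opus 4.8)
The plan is to build the desired graph on $32$ vertices by taking the $16$ points of the biaffine plane $\mathcal B_4$ together with $16$ additional vertices, and to exploit the structure of $\mathcal B_4$ as a $4$-regular bipartite graph of girth $6$. Concretely, I would start from the $q^2=16$ points $\mathscr P = \mathbb F_4\times\mathbb F_4$ of $\mathcal B_4$ and note that they fall into $4$ parallel classes $P_x$ of size $4$, and similarly the $16$ lines fall into $4$ classes $L_m$; by Remark~\ref{remark1}(iii) the points of a fixed $P_x$ are pairwise at distance $4$ in $\mathcal B_4$, as are the lines of a fixed $L_m$. The idea is to discard the $16$ line-vertices of $\mathcal B_4$ and instead glue each vertex of $P_x$ to a new structure that raises its degree from (its degree in the graph induced on points, which is $0$) back up to $5$, while simultaneously creating $5$-cycles through every edge. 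A clean way to do this: on the point set, first add edges turning each parallel class $P_x$ into a clique $K_4$ (contributing degree $3$), then add a perfect matching or a union of further edges between classes to reach degree $5$, and verify girth $5$ and the edge-girth-regularity by a direct count using the $\mathbb F_4$-arithmetic.

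The key steps, in order, would be: (1) fix an explicit model — coordinatize $\mathbb F_4=\{0,1,\omega,\omega^2\}$ and describe the $32$ vertices and the adjacency rule algebraically, so that the automorphism group (which should act vertex-transitively, or at least edge-transitively) is visible; (2) check $5$-regularity, which is immediate from the construction once the adjacency rule is pinned down; (3) verify that the girth is exactly $5$: one must rule out triangles and $4$-cycles, and exhibit at least one $5$-cycle — here the fact that distinct classes $P_x$, $P_{x'}$ are joined in $\mathcal B_4$ only through length-$4$ paths, combined with the added clique edges, should force short cycles to have length $\geq 5$; (4) count, for a representative edge of each orbit under the automorphism group, the number of $5$-cycles containing it, and confirm this equals $12$ for every orbit. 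Since $egr$-regularity requires the count to be the same on all edges, step (4) is where the choice of the "degree-raising" edges between parallel classes must be made carefully so that the two a priori different types of edge (clique edges inside a $P_x$ versus cross edges) both lie in exactly $12$ pentagons.

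The main obstacle is step (4): getting the pentagon count to come out to the constant value $12$ on \emph{every} edge. This is a genuine design constraint, not a routine computation — a generic way of adding edges to reach degree $5$ will produce an edge-regular girth-$5$ graph only by accident, so the construction has to be reverse-engineered from the requirement $\lambda=12$. I expect the resolution to use the sharply transitive or near-transitive action of a group of order $32$ (or a subgroup of $\mathrm{A\Gamma L}(1,16)$ acting on the points, extended by the involution swapping the two halves) to collapse the verification to a single edge, after which the count $12 = \binom{4}{2}\cdot 2$ or $12 = 3\cdot 4$ should fall out of the $\mathbb F_4$-arithmetic. The remaining steps (1)--(3) are then bookkeeping: girth $5$ follows because any would-be $3$- or $4$-cycle would have to use a cross edge pattern forbidden by Remark~\ref{remark1}(iii), and a pentagon is easy to display explicitly.
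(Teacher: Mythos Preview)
Your proposal has a genuine gap, and in fact the concrete suggestion you make cannot work. Turning each parallel class $P_x$ into a clique $K_4$ immediately creates triangles, so the resulting graph has girth~$3$, not~$5$; the argument in your step~(3) that ``short cycles have length $\geq 5$'' would fail at the very first check. Beyond this, the proposal never actually specifies a graph: you concede that step~(4) ``has to be reverse-engineered'' and leave the cross edges and the role of the $16$ ``additional vertices'' undetermined, so there is nothing concrete to verify.

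The paper's construction is both different and much simpler. It does \emph{not} discard the line-vertices: the $32$ vertices are precisely the $16$ points together with the $16$ lines of $\mathcal B_4$, and \emph{all} edges of $\mathcal B_4$ are kept. To raise the degree from $4$ to $5$ one adds a single perfect matching $\mathcal E$, each matching edge lying inside one block $P_i$ or $L_j$ (namely $(i,0)(i,1)$, $(i,\alpha)(i,\alpha^2)$ on the point side and $[j,0][j,\alpha]$, $[j,1][j,\alpha^2)$ on the line side). Because $\mathcal E$ is only a matching, no triangles are created; $4$-cycles are excluded by checking that the two endpoints of a matching edge have disjoint $\mathcal B_4$-neighbourhoods and that two matching edges are never linked by a pair of $\mathcal B_4$-edges; a pentagon is then exhibited explicitly. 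Vertex-transitivity is proved using the translations $\tau_{(a,b)}$ together with a point--line correlation $\Phi_\alpha$, and the count $\lambda=12$ is obtained by a short case analysis on three edge types (matching edges in $\mathcal P$, matching edges in $\mathcal L$, and original $\mathcal B_4$-edges), each case yielding $12$ pentagons.

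The key idea you are missing is therefore: keep the whole bipartite incidence graph $\mathcal B_4$ and perturb it by a perfect matching inside the blocks, rather than discarding half of it and trying to rebuild regularity with cliques.
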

\begin{proof}
Let $\mathcal B_4$ the incidence graph of the biaffine plane of order $4$ given in Remark \ref{remark1}. 

Let $\mathbb{F}_4$ be the Galois Field of order $4$ constructed as an extension field of $\mathbb{Z}_2$, that is, the elements of $\mathbb{F}_4$ are $\{0,1,\alpha,\alpha^2\}$, where $\alpha$ is a primitive root of the irreducible polynomial $f(x)=x^2+x+1$ over $\mathbb{Z}_2$. Recall that, in this case $\alpha^2=\alpha+1$.

Note that $\mathcal B_4$ has $32$ vertices distributed in $8$ sets of size $4$. The vertex-set $V(\mathcal B_4) = P_0 \cup P_1 \cup P_\alpha \cup P_{\alpha^2} \cup L_0 \cup L_1 \cup L_\alpha\cup L_{\alpha^2}$. The vertices in the sets $P_i$ for $i\in \mathbb{F}_4$ correspond to points of the biaffine plane and the vertices in the sets $L_j$ for $j\in \mathbb{F}_4$ correspond to lines. We call the vertices that correspond to points $P$-vertices and the vertices that correspond to lines $L$-vertices. 
Starting with $\mathcal B_4$ we construct a graph $\mathcal H$ with same vertex-set as $\mathcal B_4$ and the following set of edges.
\[ E(\mathcal H)=E(\mathcal B_4)\cup \mathcal{E}\]
where \[\mathcal{E}=\{(i,0)(i,1),(i,\alpha)(i,\alpha^2) | i\in \mathbb{F}_4\} \cup \{[j,0][j,\alpha],[j,1][j,\alpha^2] | j \in \mathbb{F}_4\}.\]

Note that, as $\mathcal B_4$ is $4$-regular and $\mathcal{E}$ is a matching, the graph $\mathcal H$ is a $5$-regular graph of order 32. It remains to show that $\mathcal H$ has girth $5$ and that $\lambda=12$. 

First we prove that $\mathcal H$ is a vertex-transitive graph (this proof is analogous to the one given in \cite{ANS06}). Note that the translations of the biaffine plane still act as automorphisms of $\mathcal H$, that is $\tau_{(a,b)}(i,j)=(i+a,j+b)$ and the same holds for the lines of $\mathcal B_4$. Moreover, let $\Phi_\alpha$ be an automorphism that exchanges points and lines and preserves incidences defined as $\Phi_\alpha(i,j)=[i,\alpha j]$ and $\Phi_\alpha[i,j]=(\alpha i,\alpha j).$ The $\tau(a,b)$'s show that all elements of $\mathcal P$ are in a same orbit and the elements of $\mathcal L$ are also in a same orbit. An element $\Phi_\alpha$ swaps elements of $\mathcal P$ with elements of $\mathcal L$. Hence there is a unique orbit on the vertices of $\mathcal B_4$ under the action of the automorphism group of $\mathcal B_4$ and the graph is vertex-transitive.

To prove that $\mathcal H$ has girth $5$, notice that as $\mathcal B_4$ has girth equal to $6$, we have to prove that by adding the edges of $\mathcal E$ to construct $\mathcal H$ we do not produce $3$-cycles or $4$-cycles but we produce 5-cycles. Clearly, $\mathcal H$ does not have $3$-cycles, because as $\mathcal B_4$ is bipartite, any $3$-cycle would have at least one edge in $\mathcal E$. But as $\mathcal E$ is a matching it should have at most one edge in $\mathcal E$. By construction if $e=uv\in \mathcal E$ we have that $N(u)\cap N(v)=\emptyset$, and we do not have  a $3$-cycle. As $\mathcal B_4$ has girth $6$, if $\mathcal H$ has a $4$-cycle, this cycle has edges in $\mathcal E$. Moreover as $\mathcal E$ is a matching we have to prove that if we have an edge $e=uv \in \mathcal E$ the neighbors of $u$ and $v$ on $\mathcal L$, called $u'$ and $v'$ respectively, do not induce an edge in $\mathcal E$. Let $uv$ be an edge between vertices of $\mathcal P$. Then $u=(x,y)$ and $v=(x,y+1)$ for some $x$ and $y\in \mathbb F_q$ and let $u'=[m,b]$ adjacent to $u$. By construction $y=mx+b$ and $y+1=mx+b+1$. Hence $v'=[m,b+1]$. But, by construction of $\mathcal H$ we have that $u'v'\notin \mathcal E$. Hence we do not have $4$-cycles. 
    
    Now, to prove that the girth is equal to $5$, we only have to give one $5$-cycle.  $\mathcal{C}=((0,0),(0,1),[1,1],(1,0),[0,0],(0,0)).$
    A part of the graph $\mathcal H$ is depicted in Figure \ref{special}. For clarity in the picture we do not draw all edges. The cycle $\mathcal{C}$ is depicted in red.
    
    \begin{figure}
\centering
\includegraphics[width=0.6\linewidth]{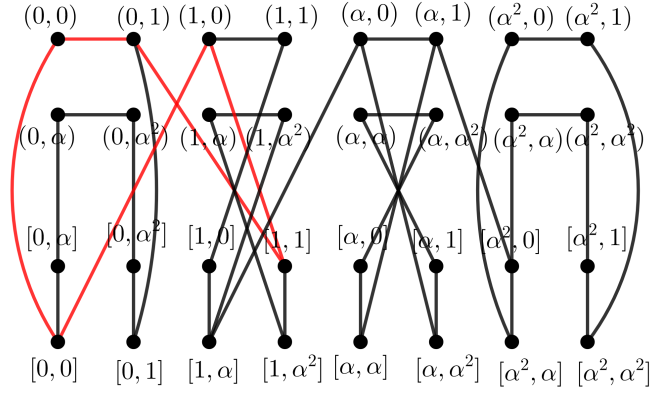}
\caption{A part of the $egr(32,5,5,12)$ graph $\mathcal H$, with some of the edges}
\label{special}
\end{figure}

To prove that $\lambda=12$ we have to analyze three types of edges, namely edges joining vertices both in $\mathcal P$ or both in $\mathcal L$, and edges joining a vertex in $\mathcal P$ to a vertex in $\mathcal L$.
    Note that, by construction, the $5$-cycles do not have more than one edge in $\mathcal E$. Moreover, as $\mathcal B_4$ is bipartite they have exactly one edge on $\mathcal E$.
    
    Let us first consider the case where $a,b\in \mathcal P$ and $ab\in E(\mathcal H)$. Then $a=(x,y)$ and $b=(x,y+1)$ for some $x,y\in \mathbb{F}_4$. We have four options to choose one neighbour $c$ of $b$ in the $\mathcal L$-vertices (we can pick one in each $L_j$ for $j\in \mathbb{F}_4$). Suppose that $bc\in E(\mathcal H)$ and $c$ is a $\mathcal L$-vertex. To take a neighbour $d$ of $c$ in $\mathcal P$ we have three possibilities (one in each $P_x'$ for $x'\in \mathbb{F}_q$ and $x'\not= x$), that is we have the path $(a,b,c,d)$ of length $3$ and now the rest of the cycle is determined by the common neighbour of $a$ and $d$ in $\mathcal L$. Then, we have $12$ options to complete a $5$-cycle that contains $ab$. 

        A similar proof to the one above can be written if $a,b \in \mathcal L$.
        
    Finally, let $u\in \mathcal P$, $v\in \mathcal L$ and $uv\in E(\mathcal H)$. Then $u=(x,y)$ and $v=[m,b]$ with $\{x,y,m,b\}\in \mathbb{F}_4$, and let ${\mathcal C} =(u',u,v,v',w)$ be a $5$-cycle that contains $uv$. As we said before, $\mathcal C$ has exactly one edge in $\mathcal E$. Obviously, $u,v\not\in \mathcal E$.
        Suppose that either $u'u$ or $vv'$ is in $\mathcal E$. If $u'u\in \mathcal E$, we have only one option to take a neighbour $u'$ of $u$ in $P_x$, and we have tree options to take a neighbour $w$ of $u'$ in $P_x'$ for $x'\in \mathbb{F}_4$ and $x\not= x'$. The same happens if $vv'\in \mathcal E$. This gives six circuits of length 5 containing $uv$.
        Now, suppose that either $wu'$ or $v'w$ be in $\mathcal E$. If $wu'\in \mathcal E$, we have three options to select $u'$ (the neighbour of $u$) in $L_m'$ for $m'\in \mathbb{F}_4$ and $m\not= m'$, and we have only one option to select the neighbour of $u'$ in $L_m'$ (that is the edge $wu'\in \mathcal E$) and the rest of the cycle is determined by $v'$ (the neighbour of $v$ and $w$ in $P$). We can make a similar proof if $v'w$ be in $\mathcal E$. This gives another six circuits of length 5 containing $uv$.
        Hence in total we have 12 circuits of length 5 containing the edge $uv$.
\end{proof}


\subsection{Analysing the graph with {\sc Magma}}
Let $\mathcal G$ be the $egr(32,5,5,12)$ that we constructed in the previous section. It can be constructed in {\sc Magma}~\cite{magma} using the following code.

\begin{verbatim}
gr:=Graph<32|
[{ 2, 24, 17, 20, 21 },{ 22, 1, 25, 18, 30 },{ 23, 26, 6, 17, 30 },
{ 14, 26, 27, 19, 31 },{ 25, 27, 17, 28, 9 },{ 3, 28, 18, 31, 21 },
{ 22, 27, 29, 20, 10 },{ 22, 16, 17, 31, 32 },{ 26, 5, 18, 20, 32 },
{ 24, 25, 7, 19, 32 },{ 23, 13, 24, 27, 18 },{ 22, 23, 15, 19, 21 },
{ 11, 28, 19, 30, 20 },{ 23, 4, 28, 29, 32 },{ 12, 24, 29, 30, 31 },
{ 25, 26, 29, 8, 21 },{ 1, 3, 5, 8, 19 },{ 11, 2, 6, 29, 9 },
{ 12, 13, 4, 17, 10 },
{ 1, 13, 7, 9, 31 },{ 1, 12, 16, 27, 6 },{ 12, 2, 28, 7, 8 },
{ 11, 12, 3, 14, 25 },
{ 11, 1, 15, 26, 10 },{ 23, 2, 5, 16, 10 },{ 24, 3, 4, 16, 9 },
{ 11, 4, 5, 7, 21 },{ 22, 13, 14, 5, 6 },{ 14, 15, 16, 7, 18 },
{ 2, 13, 3, 15, 32 },
{ 4, 15, 6, 8, 20 },{ 14, 8, 30, 9, 10 }]>;
\end{verbatim}
It is then easy to check that indeed this graph is 5-regular (that can be seen from the construction) and has girth five using {\sc Magma} built-in functions.
To check that every edge is on 12 pentagons, one can write a program that will construct all pentagons of that graph and count on how many pentagon each edge is.
Such a brute-force program takes less than 2/10th of a second to run and get that $\lambda = 12$. We provide in an appendix at the end of the paper a {\sc Magma} function to check this.

The automorphism group of $\mathcal G$ is $E_{16}:C_3$. It is the semi-direct product of an elementary abelian group of order 16 with a cyclic group of order 3.

{\section{A family of  $egr(q(q^2+1),q,5,\lambda)$-graphs}\label{egrgsg}}

In~\cite{Lee98}, Leemans classified rank two incidence geometries that satisfy some properties for the infinite family of finite simple groups $Sz(q)$.
Among the geometries found, one is a family of graphs that are of interest for this paper. It is the first geometry of~\cite[Table 2]{Lee98}. We recall here how to construct it. But first, we need to give a small introduction to incidence geometry and coset geometries.

\subsection{Incidence geometries and coset geometries}
An {\em incidence system} \cite{BuekCohen}, $\Gamma := (X, *, t, I)$ is a 4-tuple such that
\begin{itemize}
\item $X$ is a set whose elements are called the {\em elements} of $\Gamma$;
\item $I$ is a set whose elements are called the {\em types} of $\Gamma$;
\item $t:X\rightarrow I$ is a {\em type function}, associating to each element $x\in X$ of $\Gamma$ a type $t(x)\in I$;
\item $*$ is a binary relation on $X$ called {\em incidence}, that is reflexive, symmetric and such that for all $x,y\in X$, if $x*y$ and $t(x) = t(y)$ then $x=y$.
\end{itemize}
The {\em incidence graph} of $\Gamma$ is the graph whose vertex set is $X$ and where two vertices are joined provided the corresponding elements of $\Gamma$ are incident. 

A {\em flag} is a set of pairwise incident elements of $\Gamma$, i.e. a clique of its incidence graph.
The {\em type} of a flag $F$ is $\{t(x) : x \in F\}$.
A {\em chamber} is a flag of type $I$.
An element $x$ is {\em incident} to a flag $F$ and we write $x*F$ for that, when $x$ is incident to all elements of $F$.
An incidence system $\Gamma$ is a {\em geometry} or {\em incidence geometry} if every flag of $\Gamma$ is contained in a chamber (or in other words, every maximal clique of the incidence graph is a chamber).
The {\em rank} of $\Gamma$ is the number of types of $\Gamma$, namely the cardinality of $I$.

Let $\Gamma:= (X, *, t, I)$ be an incidence system.
Given $J\subseteq I$, the {\em $J$--truncation} of $\Gamma$ is the incidence system $\Gamma^J := (t^{-1}(J), *_{\mid t^{-1}(J)\times t^{-1}(J)}, t_{\mid J}, J)$. In other words, it is the subgeometry constructed from $\Gamma$ by taking only elements of type $J$ and restricting the type function and incidence relation to these elements.

Let $\Gamma:= (X, *, t, I)$ be an incidence system.
Given a flag $F$ of $\Gamma$, the {\em residue} of $F$ in $\Gamma$ is the incidence system $\Gamma_F := (X_F, *_F, t_F, I_F)$ where
\begin{itemize}
\item $X_F := \{ x \in X : x * F, x \not\in F\}$;
\item $I_F := I \setminus t(F)$;
\item $t_F$ and $*_F$ are the restrictions of $t$ and $*$ to $X_F$ and $I_F$.
\end{itemize}

An incidence system $\Gamma$ is {\em residually connected} when each residue of rank at least two of $\Gamma$ has a connected incidence graph. It is called {\em firm} (resp. {\em thick}) when every residue of rank one of $\Gamma$ contains at least two (resp. three) elements.  

Let $\Gamma:=(X,*, t,I)$ be an incidence system.
An {\em automorphism} of $\Gamma$ is a mapping
$\alpha:(X,I)\rightarrow (X,I):(x,t(x)) \mapsto (\alpha(x),t(\alpha(x))$
where
\begin{itemize}
\item $\alpha$ is a bijection on $X$ inducing a bijection on $I$;
\item for each $x$, $y\in X$, $x*y$ if and only if $\alpha(x)*\alpha(y)$;
\item for each $x$, $y\in X$, $t(x)=t(y)$ if and only if $t(\alpha(x))=t(\alpha(y))$.
\end{itemize}
An automorphism $\alpha$ of $\Gamma$ is called {\em type preserving} when for each $x\in X$, $t(\alpha(x))=t(x)$.
The set of all automorphisms of $\Gamma$ together with the composition forms a group that is called the {\em automorphism group} of $\Gamma$ and denoted by $Aut(\Gamma)$.
The set of all type-preserving automorphisms of $\Gamma$ is a subgroup of $Aut(\Gamma)$ that we denote by $Aut_I(\Gamma)$.
An incidence system $\Gamma$ is {\em flag-transitive} if $Aut_I(\Gamma)$ is transitive on all flags of a given type $J$ for each type $J \subseteq I$.

A rank two geometry with points and lines is called a {\em generalized digon} if every point is incident to every line.

Let $\Gamma$ be a firm, residually connected and flag-transitive geometry.  As defined in~\cite{Bue79}, the \emph{Buekenhout diagram} of $\Gamma$ is a graph whose vertices are the elements of $I$ and with 
an edge $\{i,j\}$ with label $d_{ij} - g_{ij} - d_{ji}$ whenever every residue of type $\{i,j\}$ is not a generalized digon. The number $g_{ij}$ is called the {\em gonality} and is equal to half the girth of the incidence graph of a residue of type $\{i,j\}$. The number $d_{ij}$ is called the $i$-diameter of a residue of type $\{i,j\}$ and is the longest distance from an element of type $i$ to any element in the incidence graph of the residue.
Moreover, to every vertex $i$ is associated a number $s_i$, called the $i$-order, which is equal to the size of a residue of type $i$ minus one, and a number $n_i$ which is the number of elements of type $i$ of the geometry.
If $g_{ij} = d_{ij} = d_{ji} = n$, then every residue of type $\{i,j\}$ of $\Gamma$ is called a {\it generalized
n-gon} and we do not write $d_{ij}$ and $d_{ji}$ on the picture.
If $g_{ij} = d_{ij} = d_{ji} - 1 = 3$, and $s_i = 1$, then every residue of type $\{i,j\}$ of $\Gamma$ is the incidence geometry corresponding to the complete graph of $s_j+1$ vertices and we write $\sf c$ instead of $d_{ij} - g_{ij} - d_{ji}$ on the picture.

The basic concepts about geometries constructed from a group and some of its subgroups are due to Jacques Tits \cite{Tit57}
(see also \cite{Bue95}, chapter 3).
Let $I$ be a finite set and let $G$ be a group together with a family of subgroups ($G_i$)$_{i \in I}$.
We define the incidence system $\Gamma = \Gamma(G,(G_i)_{i \in I})$ as follows.
The set $X$ of {\it elements} of $\Gamma$ consists of all cosets $G_ig$, $g \in G$, $i \in I$.
We define an {\it incidence relation} * on $X$ by :
$G_ig_1$ * $G_jg_2$ iff $G_ig_1 \cap G_jg_2 \neq \emptyset$.
The {\it type function} $t$ on $\Gamma$ is defined by $t(G_ig) = i$.
The {\it type} of a subset $Y$ of $X$ is the set $t(Y)$;
its {\it rank} is the cardinality of $t(Y)$ and we call $\mid t(X) \mid$ the {\it rank} of $\Gamma$.
The incidence system $\Gamma$ is also called a {\it coset geometry} as it is build from cosets of subgroups of a group.
The group $G$ acts on $\Gamma$ as an automorphism group, by right translation, preserving the type of each element.\\
Every coset geometry of rank at most three is a geometry.

Let $\Gamma(G;G_0, \ldots ,G_{n-1})$ be a rank $n$ pre-geometry.
We call $C = \{G_0, \ldots ,G_{n-1}\}$ the {\it maximal parabolic chamber associated to} $\Gamma$.
Assuming that $F$ is a subset of $C$,
the {\it residue} of $F$ is the pre-geometry
\begin{center}
$\Gamma_F = \Gamma(\cap_{j \in t(F)} G_j,(G_i \cap(\cap_{j \in t(F)}G_j))_{i \in I \backslash t(F)})$
\end{center}
If $\Gamma$ is flag-transitive and $F$ is any flag of $\Gamma$, of type $t(F)$, then the residue
$\Gamma_F$ of $\Gamma$ is isomorphic to the residue of the flag $\{G_i, i \in t(F)\} \subseteq C$.

Given a rank two coset geometry $\Gamma(G, \{G_0,G_1\}$
, the {\em 0-incidence graph} or {\em point-incidence graph} of $\Gamma$ is the graph whose vertices are the orbits of the action of $G_0$ on the cosets of $G_0$ and $G_1$. An orbit $A$ is joined to an orbit $B$ provided there are cosets in $A$ that are incident to cosets in $B$. Moreover, on the edge, we add two numbers, one of each closer to one of the vertices. The number $X$ near the orbit $A$ tells us how many cosets in the orbit $B$ are incident to a given coset of the orbit $A$, and the number $Y$ near the vertex $B$ tells us how many cosets in the orbit $A$ are incident to a given coset of the orbit $B$. Finally, inside a vertex, we put the number of elements of the orbit the vertex represents. Usually, we use two types of vertices to distinguish easily from which types of cosets the orbits are.
We can define the {\em 1-incidence graph} or {\em line-incidence graph} by swapping the roles of 0 and 1 in the above.


\subsection{A family of $egr(q(q^2+1),q,5,q-1)$-graphs}
Let $G\cong Sz(q)$ (with $q = 2^{2e+1}$ and $e$ a strictly positive integer) be a Suzuki group acting on an ovoid $\mathcal D$ of $PG(3,q)$.
Let $p$ be a point of $\mathcal D$ and $G_p$ be the stabilizer of $p$ in $G$.
The subgroup $G_p\cong E_q\cdot E_q:C_{q-1}$ has a class of $q$ maximal subgroups isomorphic to $E_q : C_{q-1}$.
Let $G_1 \cong E_q : C_{q-1}$ be the stabilizer of a circle of $\mathcal D$. It has a special point $n$ called the nucleus. Its orbits on $\mathcal D$ are of respective sizes $[1,q,q^2-q]$. Pick a point $p$ in the orbit of size $q$. Let $G_2$ be the stabilizer of $\{n,p\}$ in $G$. Then $G_2\cong D_{2(q-1)}$ is such that $G_1\cap G_2 \cong C_{q-1}$.
The coset geometry $\Gamma(G;\{G_1,G_2\})$ is such that the index of $G_1\cap G_2$ in $G_2$ is two, meaning we can construct a graph from this geometry. Let $\mathcal G_q(V,E)$ be the graph whose vertex-set $V$ is the set of cosets of $G_1$ in $G\cong Sz(q)$ and whose edge-set is the set of cosets of $G_2$ in $G$. A vertex $G_1g$ is on an edge $G_2h$ if and only if $G_1g\cap G_2h \neq \emptyset$.

\begin{lemma}\label{girthatmostfive}
The graph $\mathcal G_q(V,E)$ has girth at most five.
\end{lemma}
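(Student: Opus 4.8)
The plan is to exhibit an explicit closed walk of length at most five in $\mathcal G_q(V,E)$, or equivalently a short cycle in the incidence structure, by exploiting the small indices among the subgroups $G_1$, $G_2$ and their intersections together with the action of $G\cong Sz(q)$. Recall that vertices are cosets of $G_1\cong E_q:C_{q-1}$, edges are cosets of $G_2\cong D_{2(q-1)}$, and $G_1\cap G_2\cong C_{q-1}$ has index $q$ in $G_1$ and index $2$ in $G_2$. Thus every edge $G_2h$ contains exactly two vertices (the two $G_1$-cosets meeting it), and every vertex lies on exactly $[G_1:G_1\cap G_2]=q$ edges — confirming $q$-regularity — so the graph is genuinely a graph and we are looking for a cycle of length $\le 5$ through the identity coset.

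First I would pass to a concrete model: since $G$ acts transitively (by right translation) on the vertices, it suffices to find a short cycle through $v_0 := G_1$. The neighbours of $v_0$ are the vertices $G_1g$ for which $G_1g$ meets one of the $q$ edges $G_2, G_2x, \dots$ through $v_0$; concretely the edge $G_2$ joins $v_0 = G_1$ to $v_1 := G_1\sigma$ where $\sigma\in G_2\setminus(G_1\cap G_2)$ is the involution swapping the two vertices of that edge (note $G_2 = (G_1\cap G_2)\sqcup (G_1\cap G_2)\sigma$ and $\sigma^2=1$). Then I would track how the $q$ edges through $v_1$ relate back to edges through $v_0$: two vertices $v_0,v_1$ at distance one have a common neighbour iff some edge through $v_1$ other than $G_2\sigma = G_2$ meets some edge through $v_0$, and one can count such coincidences using double cosets $G_2\backslash G/G_1$ and $G_1\backslash G/G_1$. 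The key arithmetic input is that $Sz(q)$ has order $q^2(q^2+1)(q-1)$ while the vertex set has size $q(q^2+1)$ and there are $q$ edges per vertex; comparing $|G|$ with the number of paths of length $\le 4$ emanating from $v_0$ forces coincidences (a pigeonhole / counting argument) unless the girth is large, but a Moore-type bound shows a $q$-regular graph on $q(q^2+1)$ vertices with girth $\ge 6$ would need at least $\approx q^3$ vertices, and $q(q^2+1) = q^3 + q < 2q^2 \cdot \text{(Moore term)}$ — so the order is simply too small for girth $6$, giving girth $\le 5$ almost for free.

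Concretely, the cleanest route I would take is the Moore-bound argument: a $q$-regular graph of girth $\ge 5$ on $n$ vertices satisfies $n \ge 1 + q + q(q-1) = q^2+1$, which is fine here, but girth $\ge 6$ forces $n \ge 2\sum_{i=0}^{2}(q-1)^i$... actually the sharper obstruction for \emph{bipartite-free} graphs is girth $\ge 6 \Rightarrow n \ge q^2+q+1$-ish only when bipartite, so instead I would use: girth $\ge 6$ implies the neighbourhoods of the two endpoints of any edge, together with their second neighbourhoods, are disjoint, yielding $n \ge 2\big(1 + (q-1) + (q-1)^2\big) = 2q^2-2q+2$. Since $q(q^2+1) \ge 2q^2-2q+2$ for $q\ge 2$, this does not immediately contradict girth $6$, so the pure Moore bound is \emph{not} enough and I would instead have to produce an actual short cycle. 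The honest main obstacle, therefore, is the explicit computation: one must identify the involution $\sigma \in G_2$, pick a suitable element $g \in G$ witnessing an incidence of an edge through $v_1$ with another edge through $v_0$, and verify that chaining such witnesses closes up in five steps — i.e. exhibit $g_1,\dots,g_5$ with consecutive $G_1$-cosets sharing a $G_2$-coset and $G_1 g_5 = G_1$. The Suzuki group's $2$-transitive action on the ovoid $\mathcal D$ of size $q^2+1$ is the right tool: translating the combinatorial data through that action, a $5$-cycle corresponds to a short ``triangle-with-a-pendant'' configuration of circles and points of the ovoid, whose existence follows from the known intersection pattern of circles (the orbit sizes $[1,q,q^2-q]$ of $G_1$ on $\mathcal D$ guarantee enough incidences). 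I expect the bookkeeping of which coset meets which — really a computation inside $Sz(q)$ or its action — to be the delicate part, while the conclusion that \emph{some} closed walk of length $\le 5$ exists is robust.
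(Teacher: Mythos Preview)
Your proposal does not actually prove the lemma. You correctly recognise that the Moore bound for girth $6$ gives only $n\ge 2(q^2-q+1)$, which is smaller than $q(q^2+1)$, so the counting argument fails; and you then say that one \emph{must} exhibit an explicit $5$-cycle --- but you never do. The remainder of the proposal is a description of how one might go about looking for witnesses $g_1,\dots,g_5$ via double cosets or via the circle geometry on the ovoid, together with an assertion that ``enough incidences'' exist. None of this is carried out, and the orbit sizes $[1,q,q^2-q]$ you cite do not by themselves force a closed walk of length exactly five; they are compatible with longer cycles too. So as written there is a genuine gap: the crucial step --- producing a pentagon --- is only promised, not delivered.

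The paper's argument avoids all of this bookkeeping by a structural observation you are missing. The rank-two coset geometry $\Gamma(G;\{G_1,G_2\})$ extends to a rank-three coset geometry $\Gamma(G;\{G_1,G_2,G_3\})$ with $G_3\cong D_{10}$ chosen so that $G_1\cap G_3\cong C_2\cong G_2\cap G_3$ and $G_1\cap G_2\cap G_3=\{1\}$. In this extended geometry the residue of any element of type $3$ is the rank-two geometry $\Gamma(D_{10};\{C_2,C_2\})$, which is a pentagon. Since the graph $\mathcal G_q$ is exactly the $\{1,2\}$-truncation of this rank-three geometry, every element of type $3$ literally \emph{is} a $5$-cycle in $\mathcal G_q$, and the girth is at most five. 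The subgroup $D_{10}$ inside $Sz(q)$ (coming from the factor $q^2+1\equiv 0\pmod 5$) is the key ingredient that your coset-chasing approach never locates.
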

\begin{proof}
This is a direct consequence of the fact that $\Gamma(G;\{G_1,G_2\})$ is a rank two truncation of a geometry of rank three of the second family described in~\cite[Theorem 5.3]{Lee99}.
One can check that the geometry $\Gamma := \Gamma(G;\{G_1,G_2, G_3\})$ with $G_3 \cong D_{10}$ such that $G_1\cap G_3\cong C_2 \cong G_2\cap G_3$ and $G_1\cap G_2\cap G_3\cong \{1_G\}$ exists even when $q-1$ is not a prime. In that case, $\Gamma$ is not residually weakly primitive but this is not important for our purpose. The geometry $\Gamma$ has the following Buekenhout diagram (see~\cite{Lee99} for the definitions).

\begin{center}
\begin{picture}(10,2)
\put(0,1.8){\circle*{0.2}}
\put(5,1.8){\circle*{0.2}}
\put(10,1.8){\circle*{0.2}}
\put(0,1.8){\line(1,0){10}}
\put(2.5,2){5}
\put(7.5,2){$\mathsf c$}
\put(0,1.2){1}
\put(5,1.2){1}
\put(10,1.2){$q-2$}
\put(0,0.6){$(q^2+1)q^2$}
\put(5,0.6){$(q^2+1)q$}
\put(10,0.6){$(q^2+1)q^2(q-1)/10$}
\put(0,0){$D_{2(q-1)}$}
\put(5,0){$E_q:C_{q-1}$}
\put(10,0){$D_{10}$}
\end{picture}
\end{center}
The geometry $\Gamma(G;\{G_1,G_2\})$ being a truncation of $\Gamma$ and the rank two residues of type $\{1,2\}$ being pentagons, we can conclude that the girth of $\mathcal G_q(V,E)$ is at most five.
\end{proof}

\begin{lemma}\label{girthfive}
The graph $\mathcal G_q(V,E)$ has girth five.
\end{lemma}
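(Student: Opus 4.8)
The plan is to build on Lemma~\ref{girthatmostfive}, which already gives that the girth of $\mathcal G_q(V,E)$ is at most five: it then remains to show that the graph contains no $3$-cycle and no $4$-cycle. Since $G\cong Sz(q)$ acts by right translation as a vertex-transitive (indeed edge-transitive) group of automorphisms of $\mathcal G_q(V,E)$, we may fix the base vertex $v_0:=G_1$, and it suffices to prove: (i) that $N(v_0)$ is an independent set, which is equivalent to $\mathcal G_q(V,E)$ being triangle-free; and (ii), granting (i), that no two distinct vertices of $N(v_0)$ have a common neighbour other than $v_0$, which is equivalent to $\mathcal G_q(V,E)$ having no $4$-cycle.

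First I would describe $N(v_0)$ explicitly and recast the graph as a coset graph. The edges through $v_0$ are precisely the cosets $G_2a$ with $a\in G_1$; two of them coincide exactly when the corresponding elements of $G_1$ lie in the same right coset of $G_1\cap G_2$, giving $[G_1:G_1\cap G_2]=q$ edges, and the second endpoint of $G_2a$ is the coset $G_1z_0a$, where $z_0$ is a fixed element of $G_2\setminus(G_1\cap G_2)$ (an involution of the dihedral group $G_2$). More intrinsically, two cosets $G_1x$ and $G_1y$ are adjacent in $\mathcal G_q(V,E)$ if and only if $G_1x\neq G_1y$ and $yx^{-1}\in G_1G_2G_1$; thus $\mathcal G_q(V,E)$ is the coset graph of $G$ on the right cosets of $G_1$ with connection set $G_1G_2G_1\setminus G_1$, a symmetric union of $(G_1,G_1)$-double cosets. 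Translating (i) and (ii) back to $v_0$, they become purely group-theoretic assertions: that for suitable $s,t\in G_1G_2G_1\setminus G_1$ the product $ts^{-1}$ avoids $G_1G_2G_1\setminus G_1$, respectively that $G_1$ is the only coset $G_1r$ with both $rs^{-1}$ and $rt^{-1}$ in $G_1G_2G_1\setminus G_1$ — that is, statements about double-coset membership and about intersection numbers of conjugates of $G_1$ and $G_2$ inside $Sz(q)$.

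To settle these I would use the known subgroup structure of $Sz(q)$ and its action on the Suzuki ovoid $\mathcal D$: that $G_1\cong E_q:C_{q-1}$ is the stabiliser of a circle of $\mathcal D$, that $G_2\cong D_{2(q-1)}$ is the stabiliser of the pair $\{n,p\}$, and that $G_1\cap G_2\cong C_{q-1}$ is the torus fixing both. Geometrically, two circles are adjacent in $\mathcal G_q(V,E)$ exactly when they are incident (in $\Gamma(G;\{G_1,G_2\})$) to a common element of type $G_2$; interpreting ``incident to a common $\{n,p\}$'' as a condition on the two circles — sharing the nucleus $n$ and being related through $p$ in a prescribed way — the absence of triangles and quadrangles reduces to the statement that only very few circles can be simultaneously incident to two fixed elements of type $G_2$. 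This can be checked using the $Sz(q)$-action on $\mathcal D$ and the orbit lengths $[1,q,q^2-q]$ of $G_1$ recorded earlier, and the computation is uniform in $q=2^{2e+1}$. Alternatively one can stay purely group-theoretic and compute the relevant double cosets and intersection numbers directly from a coordinatised copy of $Sz(q)$ acting on the Suzuki--Tits ovoid.

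The hard part will be (ii), the exclusion of $4$-cycles. Ruling out triangles should be the more tractable of the two tasks: it comes down to showing that two distinct neighbours $G_1z_0a$ and $G_1z_0b$ of $v_0$ are never joined by an edge, which is governed by the fact that $C_{q-1}=G_1\cap G_1^{z_0}$ is normalised by $z_0$. For $4$-cycles, however, two distinct elements of type $G_2$ could a priori be incident to two common circles, and showing that this configuration never arises in $Sz(q)$ requires the genuine intersection computation with conjugates of $G_2$ (equivalently, the combinatorics of circles of $\mathcal D$ sharing both a nucleus and a point); this is where I expect the real work to lie. As a consistency check I would first verify the full statement for $q=8$ directly in {\sc Magma} — constructing $Sz(8)$, the subgroups $G_1$ and $G_2$, and the resulting $8$-regular graph on $520$ vertices, and computing its girth — with the uniform argument above then covering all larger odd powers of $2$.
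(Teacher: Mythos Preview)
Your proposal is a strategy outline rather than a proof: the two crucial claims~(i) and~(ii) are never actually established. For~(i) you assert that the absence of triangles ``is governed by the fact that $C_{q-1}=G_1\cap G_1^{z_0}$ is normalised by $z_0$,'' but you neither prove that $G_1\cap G_1^{z_0}=C_{q-1}$ nor explain how normalisation by $z_0$ forces $N(v_0)$ to be independent. For~(ii) you are entirely explicit that ``this is where I expect the real work to lie'' and then stop; the promised ``uniform argument'' covering all $q$ is never given, and a {\sc Magma} check for $q=8$ cannot replace it. As it stands the proposal reduces the lemma to two double-coset assertions in $Sz(q)$ and then leaves both unproved.

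The paper takes a different route that sidesteps this computation. Rather than analysing double cosets, it works with the \emph{point-incidence} and \emph{line-incidence graphs} of the rank-two geometry $\Gamma(G;\{G_1,G_2\})$: these record the $G_1$-orbits (respectively $G_2$-orbits) on cosets at increasing distance from the base element, together with the incidence multiplicities between successive orbits. The two-transitive action of $Sz(q)$ on the ovoid and the projective-space fact that two points determine a unique line force the first few layers of these orbit diagrams, and simple counting (comparing the number of lines reached against the total $(q^2+1)q^2/2$) rules out the collapses that would create short cycles. This yields girth at least five directly, and then Lemma~\ref{girthatmostfive} closes the argument. If you want to pursue your coset-graph approach, you will need to actually carry out the intersection computations with conjugates of $G_1$ and $G_2$ inside $Sz(q)$; the paper's orbit-diagram method is an alternative that packages the same information more efficiently.
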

\begin{proof}
A careful analysis of the point and line incidence graphs of $\Gamma : = \Gamma(G;\{G_1,G_2\})$ shows that the gonality of $\Gamma(G;\{G_1,G_2\}$ has to be at least five. Let us call the cosets of $G_1$ the points and the cosets of $G_2$ the lines.
There are $[G:G_1] = (q^2+1)q$ points and $[G:G_2] = (q^2+1)q^2/2$ lines.
The point-incidence graph of $\Gamma$ starts as follows thanks to the two-transitive action of $G$ on the ovoid.

\begin{center}
\begin{picture}(10,1)
\put(-0.44,0.5){\framebox{1}}
\put(0.1,0.8){$q$}
\put(0,0.6){\line(1,0){2}}
\put(2.3,0.6){\circle{0.6}}
\put(2.2,0.6){$q$}
\put(1.8,0.8){$1$}
\put(2.6,0.6){\line(1,0){2}}
\put(2.6,0.8){$1$}
\put(4.6,0.5){\framebox{$q$}}
\put(4.3,0.8){$x$}
\multiput(5,0.6)(0.2,0){5}{\line(1,0){0.1}}
\end{picture}
\end{center}
We know that $x$ is equal to 1 for, if it were greater than 1, it would contradict the fact that through two distinct points there is exactly one line in a projective space.
Now, because $C_{q-1}$ is the stabilizer of two points, there is a unique orbit of lines at distance 3 of the starting point.
The point-incidence graph can therefore be extended as follows.
\begin{center}
\begin{picture}(10,1)
\put(-0.44,0.5){\framebox{1}}
\put(0.1,0.8){$q$}
\put(0,0.6){\line(1,0){2}}
\put(2.3,0.6){\circle{0.6}}
\put(2.2,0.6){$q$}
\put(1.8,0.8){$1$}
\put(2.6,0.6){\line(1,0){2}}
\put(2.6,0.8){$1$}
\put(4.6,0.5){\framebox{$q$}}
\put(4.3,0.8){$1$}
\put(5,0.6){\line(1,0){2}}
\put(7.8,0.6){\oval(1.6,1)}
\put(7.2,0.6){$q(q-1)$}
\put(5.1,0.8){$q-1$}
\multiput(8.6,0.6)(0.2,0){5}{\line(1,0){0.1}}
\put(6.8,0.8){$y$}
\end{picture}
\end{center}
Here, $y$ cannot be equal to 2 for otherwise, we would have $q+q(q-1)$ lines, which is not enough. Hence $y = 1$ and this already shows that the girth of the graph is at least 4.
We can also draw the line-incidence graph of the geometry. It starts as follows.
\begin{center}
\begin{picture}(10,1)
\put(0,0.6){\circle{0.6}}
\put(-0.1,0.6){$1$}
\put(0.3,0.6){\line(1,0){2}}
\put(0.3,0.8){$2$}
\put(2.3,0.5){\framebox{$2$}}
\put(2.8,0.8){$q-1$}
\put(2,0.8){$1$}
\put(2.7,0.6){\line(1,0){2}}
\put(5.5,0.6){\oval(1.6,1)}
\put(4.9,0.6){$2(q-1)$}
\put(4.5,0.8){$1$}
\put(6.3,0.6){\line(1,0){2}}
\put(0.3,0.8){$2$}
\put(8.3,0.5){\framebox{$2(q-1)$}}
\put(6.4,0.8){$1$}
\put(7.8,0.8){$1$}
\multiput(10,0.6)(0.2,0){5}{\line(1,0){0.1}}
\end{picture}
\end{center}
This graph continues on the right with $q-1$ branches having a weight of 1 each. Each of these branches leads to an orbit of $2(q-1)$ lines, each of them being adjacent to exactly one of the $2(q-1)$ points of the box at the right of the picture. Hence the girth is at least five.
By Lemma~\ref{girthatmostfive}, we can conclude that the girth is indeed five.
\end{proof}
\begin{theorem}
The family of graphs $\mathcal G_q(V,E)$ is a family of
egr-graphs of girth five.
\end{theorem}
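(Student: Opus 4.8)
The plan is to read all four parameters of an edge-girth-regular graph directly off the coset description of $\mathcal G_q$ inside $G\cong Sz(q)$. The girth is already settled by Lemma~\ref{girthfive}, the order and the $q$-regularity are one-line index computations, and edge-girth-regularity itself will come for free from the fact that $G$ acts on $\mathcal G_q$ as a vertex- and edge-transitive automorphism group; so the statement of the theorem is essentially immediate, and the only place where genuine work is required is in pinning down the actual value of $\lambda$.

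First I would record the numerics. Since $|Sz(q)|=q^2(q^2+1)(q-1)$ and $G_1\cong E_q:C_{q-1}$ has order $q(q-1)$, the vertex set $V$, being the set of cosets of $G_1$, has size $[G:G_1]=q(q^2+1)$. The edges through the vertex $G_1$ are the cosets $G_2h$ meeting $G_1$; the group $G_1$ permutes them transitively with stabiliser $G_1\cap G_2\cong C_{q-1}$, so there are $[G_1:G_1\cap G_2]=q$ of them, and by transitivity of $G$ on $V$ every vertex has valency $q$. (This is consistent with a handshake count, the number of edges being $[G:G_2]=q^2(q^2+1)/2=\tfrac12 q\,|V|$, and $\mathcal G_q$ is connected because $G_2\cong D_{2(q-1)}$ is maximal in $Sz(q)$ while $G_1\not\leq G_2$, so $\langle G_1,G_2\rangle=G$.)

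Next, edge-girth-regularity. The group $G$ acts on $\mathcal G_q$ by right translation, permuting the vertices (cosets of $G_1$) and the edges (cosets of $G_2$) while preserving the incidence $G_1g\cap G_2h\neq\emptyset$; hence $G\leq\mathrm{Aut}(\mathcal G_q)$, and the induced action on the edge set is transitive. Since a graph automorphism carries $5$-cycles bijectively to $5$-cycles, the number of $5$-cycles through an edge is independent of the edge; call this common value $\lambda$. Combining this with the parameters above and Lemma~\ref{girthfive} gives that $\mathcal G_q$ is an $egr(q(q^2+1),q,5,\lambda)$ for every admissible $q$, which is exactly the assertion of the theorem.

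It remains to say something about $\lambda$, and this is where I expect the real obstacle to be. For the lower bound $\lambda\ge q-1$ in the title of the subsection I would use the rank-three geometry $\Gamma(G;\{G_1,G_2,G_3\})$ with $G_3\cong D_{10}$ from Lemma~\ref{girthatmostfive}: the $\{1,2\}$-residue of an element of type $3$ is a thin generalized pentagon, i.e. an ordinary pentagon on $[G_3:G_1\cap G_3]=5$ points and $[G_3:G_2\cap G_3]=5$ lines, each line having exactly two points, so it contracts to a $5$-cycle of $\mathcal G_q$; and a fixed edge $G_2h$ is incident to exactly $[G_2:G_2\cap G_3]=q-1$ elements of type $3$, hence lies on at least $q-1$ such ``geometric'' $5$-cycles. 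Making this rigorous requires showing that the $q-1$ pentagons through a given edge are pairwise distinct — I would expect this from a setwise-stabiliser argument, two type-$3$ elements with the same five vertices being interchanged only by an element normalising a $D_{10}$, which an order count rules out — and, for the sharp value $\lambda=q-1$ when $q=8$, ruling out any further $5$-cycles; the latter is most safely verified by an explicit computation in $Sz(8)$, along the lines of the {\sc Magma} check used in Section~\ref{sub2}.
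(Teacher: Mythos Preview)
Your proof is correct and follows essentially the same line as the paper's: compute $v=[G:G_1]=q(q^2+1)$ and $k=[G_1:G_1\cap G_2]=q$ by index calculations, invoke Lemma~\ref{girthfive} for the girth, and deduce edge-girth-regularity from the transitivity of $G$. The only cosmetic difference is that the paper appeals to arc-transitivity while you use edge-transitivity; either suffices, and your version is the weaker hypothesis. Your extra paragraph on the bound $\lambda\ge q-1$ and the $q=8$ case is not needed for the theorem as stated, but it mirrors exactly what the paper does immediately after the proof (including the {\sc Magma} verification for $q=8$ and leaving the general equality as Conjecture~\ref{lambda=q-1}).
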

\begin{proof}
For a give $q=2^{2e+1}$, the graph $\mathcal G_q(V,E)$ is a regular graph with $q(q^2+1)$ vertices since $[G:G_1] = q(q^2+1)$. Its degree is $[G_1:G_1\cap G_2] = q(q-1)/(q-1) = q$.
By Lemma~\ref{girthfive}, we already know that the girth is five.
As the graph is arc-transitive, it has to be an edge-girth-regular graph.
\end{proof}

The only unknown parameter for these graphs is $\lambda$.
We know that $\lambda$ is at least $q-1$. This is an immediate consequence of the geometry $\Gamma$ of rank three described in the proof of Lemma~\ref{girthatmostfive}. Indeed, the circuits of length five are the elements of type 3 of that geometry. Each edge of the graph is incident to $[G_2:G_2\cap G_3]$ pentagons of that geometry, that is $2(q-1)/2 = q-1$ pentagons. Hence $\lambda \geq q-1$. We conjecture that $\lambda = q-1$.
\begin{conjecture}\label{lambda=q-1}
The family of graphs $\mathcal G_q(V,E)$ is a family of
$egr(q(q^2+1),q,5,q-1)$-graphs.
\end{conjecture}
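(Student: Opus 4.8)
The plan is to reduce Conjecture~\ref{lambda=q-1} to the statement that $G\cong Sz(q)$ acts transitively on the $5$-cycles of $\mathcal G_q(V,E)$, and then to establish that transitivity by extending the orbit-diagram analysis used in the proofs of Lemmas~\ref{girthatmostfive} and~\ref{girthfive}. The lower bound $\lambda\geq q-1$ is already in hand, so everything reduces to an upper bound on $\lambda$.

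First I would set up the bookkeeping. Since $\mathcal G_q$ is arc-transitive, hence edge-girth-regular, counting incident pairs (edge, $5$-cycle) in two ways gives $\lambda = 5P/|E|$, where $P$ is the number of $5$-cycles of $\mathcal G_q$ and $|E| = [G:G_2] = (q^2+1)q^2/2$. Combined with $\lambda\geq q-1$ this yields
\[ P \;\geq\; \frac{(q-1)\,|E|}{5} \;=\; \frac{(q^2+1)q^2(q-1)}{10}, \]
and the right-hand side is exactly the number $n_3$ of elements of type $3$ in the rank-three geometry $\Gamma = \Gamma(G;\{G_1,G_2,G_3\})$ from the proof of Lemma~\ref{girthatmostfive}. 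So it suffices to prove the reverse inequality $P\leq n_3$; then $\lambda = 5n_3/|E| = q-1$. Note that only this upper bound on $P$ is needed, so no injectivity statement will be required.

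To get $P\leq n_3$ I would exhibit a surjection from the type-$3$ elements of $\Gamma$ onto the $5$-cycles of $\mathcal G_q$. A type-$3$ element is a coset $G_3g$, and the $\{1,2\}$-edge of the Buekenhout diagram of $\Gamma$ carries the label $5$, so the residue of $G_3g$ is a generalized pentagon of order $(s_1,s_2) = (1,1)$, i.e. an ordinary pentagon; since every line of $\Gamma$ lies on exactly two points, this pentagon is literally a $5$-cycle of $\mathcal G_q$. This defines a $G$-equivariant map $\Phi$ from type-$3$ elements to $5$-cycles, and because $G$ is transitive on the cosets of $G_3$, the image of $\Phi$ is the single $G$-orbit of the base pentagon $C = \Phi(G_3)$. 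Hence $\Phi$ is surjective if and only if $G$ is transitive on the $5$-cycles of $\mathcal G_q$, and this is the only thing left to prove.

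The crux, and what I expect to be the main obstacle, is this transitivity: ruling out ``non-geometric'' pentagons. By vertex-transitivity it is enough to show that $\mathrm{Stab}_G(v_0) = G_1\cong E_q:C_{q-1}$ is transitive on the $5$-cycles through a fixed vertex $v_0$. I would do this by extending the point-incidence and line-incidence orbit diagrams drawn in the proof of Lemma~\ref{girthfive} two more steps, so as to describe the $G_1$-orbits on the vertices and edges at distance at most $4$ from $v_0$; a $5$-cycle through $v_0$ is an unordered pair of edges at $v_0$ completed by a path of length $3$ between their far endpoints, and one must check that all admissible completions form a single $G_1$-orbit. This forces one to use the actual subgroup structure of $Sz(q)$ (the point stabilizers $E_q\cdot E_q:C_{q-1}$ of its $2$-transitive action on the ovoid $\mathcal D$, their pairwise intersections, and the overgroups $G_1,G_2,G_3$), rather than merely the diagram; concretely one analyses the double cosets in $G_1\backslash G/G_1$ and counts the relevant paths of length $3$. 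As a cross-check, and as an alternative route for a fixed small value such as $q=8$, one can instead use that $\mathcal G_q$ is triangle-free, so that every closed walk of length $5$ is a $5$-cycle: then $P = \mathrm{tr}(A^5)/10$ and $\lambda = \mathrm{tr}(A^5)/(q\,|V|)$, which reduces the conjecture to a computation with the spectrum of $\mathcal G_q$.
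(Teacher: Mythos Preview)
The statement you address is labelled a \emph{conjecture} in the paper, not a theorem; the authors do not prove it and only report a {\sc Magma} verification for $q=8$. So there is no paper proof to compare against.

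Your counting reduction is correct: arc-transitivity gives $\lambda = 5P/|E|$, the known bound $\lambda\geq q-1$ gives $P\geq n_3$, and surjectivity of your map $\Phi$ would give $P\leq n_3$, hence $\lambda = q-1$. The equivalence between surjectivity of $\Phi$ and transitivity of $G$ on the $5$-cycles is also correct.

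The genuine gap is that this reduction does not advance the problem. Transitivity of $Sz(q)$ on the pentagons of $\mathcal G_q$ is not a known structural fact about the Suzuki groups that you can simply invoke; up to the bookkeeping you have written out, it \emph{is} the conjecture. You yourself flag it as ``the crux'' and propose to extend the $G_1$-orbit diagrams two more layers and analyse the double cosets $G_1\backslash G/G_1$, but none of that analysis is carried out, and nothing in your outline explains why the $(q-1)$-fold branching already visible in the paper's partial diagrams should collapse to a single $G_1$-orbit on completed pentagons through $v_0$. The spectral alternative $\lambda = \mathrm{tr}(A^5)/(q\,|V|)$ is likewise only a reformulation: it shifts the difficulty to determining the spectrum of $\mathcal G_q$, which you do not attempt and which is not available from the references at hand.

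In short, the framework is set up correctly, but the proposal stops exactly where the difficulty begins; what you have is a restatement of the open conjecture rather than a proof of it.
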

We managed to check that conjecture with {\sc Magma} for $q=8$.

\section{Extremal Edge-girth-regular graphs}\label{extremalegrg}

As we said in the introduction, recently Drglin, Filipovski, Jajcay and Raiman~\cite{DFJR21} introduced a special class of edge-girth-regular graphs called {\it{extremal edge-girth-regular graphs}}. These are $egr(v,k,g,\lambda)$ with $v=n(k,g,\lambda)$ where $n(k,g,\lambda)$ is the smallest order of a $(v,k,g,\lambda)$-graph fixing the triplet $(k,g,\lambda)$. 
Recall that, for an $egr(v,k,g,\lambda)$ graph $\mathcal G$, the {\emph{excess}} of $\mathcal G$ is the difference  $|V(\mathcal G)|$-$n(k,g,\lambda)$.

In this section we state the lower bounds for extremal edge-girth-regular-graphs, we recall some edge-girth-regular graphs that are also extremal (the Pappus graph and others given also by Jajcay in \cite{DFJR21}) and prove that the graphs given in Section \ref{sub1} are extremal. In particular, for $q=3$, we obtain the Pappus graph. Also we give the excess for the other graphs constructed in this paper. 
In fact, we conjecture that the construction given in Section \ref{sub1} produces "small" girth-regular graphs that is graphs with order "close" to be extremal. In particular, we analyze the $(32,5,5,12)$-graph given in Section \ref{sub2} that should be an extremal $(32,5,5,12)$-graph. 

The lower bound for extremal edge-girth-regular graphs given in \cite{DFJR21} is the following.
\begin{theorem}\cite{DFJR21}\label{extremalbound}
Let $k$ and $g$ be a fixed pair of integers greater than or equal to $3$, and let $\lambda\leq (k-1)^{\frac{g-1}{2}}$, when $g$ is odd and $\lambda \leq (k-1)^{\frac{g}{2}}$, when $g$ is even. Then: 

\begin{equation}\label{lowercages} n(k,g,\lambda)  \geq n_0(k,g) + \left\{ \begin{array}{ll} (k-1)^{\frac{g-1}{2}}- \lambda , &\mbox{ if $g$ is odd};\\
 \lceil 2 \frac{(k-1)^{\frac{g-1}{2}}- \lambda}{k}\rceil, &\mbox{ if $g$ is
even}.\end{array}\right.\end{equation}

\end{theorem}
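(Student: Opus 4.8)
The plan is to argue directly from the combinatorics of how $g$-cycles through a fixed edge (or fixed vertex) must fit inside the Moore tree, and to count vertices outside that tree. Fix a vertex $v$ (for $g$ odd) or an edge $e = uv$ (for $g$ even), and consider the BFS tree(s) $\mathcal T$ of depth $(g-1)/2$, respectively the double tree of depth $(g-2)/2$ rooted at $e$; this tree has exactly $n_0(k,g)$ vertices and, since $\mathcal G$ has girth $g$, it embeds in $\mathcal G$ with no identifications among its vertices. The key observation is that a $g$-cycle through $v$ (resp.\ through $e$) is obtained by picking two of the $k$ (resp.\ one of the $k-1$ on each side) branches descending from the root, following them to the leaves, and then joining a leaf on one branch to a leaf on the other by an edge; each such cycle is determined by its ``last edge'' between two leaves at maximal depth, and distinct cycles through the fixed vertex/edge that use the same pair of leaves would create a shorter cycle, so the $g$-cycles through $v$ (resp.\ $e$) are in bijection with a certain set of edges among the leaves of $\mathcal T$.

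First I would handle the odd case. The leaves of $\mathcal T$ at depth $(g-1)/2$ number $k(k-1)^{(g-1)/2}$ when counted with the branch they hang from... more precisely, each of the $(k-1)^{(g-1)/2}$ leaves on a fixed branch must be matched to leaves on the other branches to form $g$-cycles through $v$. Counting incidences: a fixed leaf $\ell$ at depth $(g-1)/2$ can be the endpoint of at most $(k-1)^{(g-1)/2}$ many $g$-cycles through $v$ that ``turn around'' at $\ell$ — but $\mathcal G$ is $k$-regular, so $\ell$ has only $k-1$ edges other than the one going back up its branch. Here is the crux: if every such edge went to another leaf of $\mathcal T$, the edge $v\ell'$ (the first edge of $\ell$'s branch) would lie on $\lambda$ $g$-cycles, and a short count shows $\lambda$ would then have to equal $(k-1)^{(g-1)/2}$; so for each unit by which $\lambda$ falls short of $(k-1)^{(g-1)/2}$, at least one edge at $\ell$ must leave $\mathcal T$, i.e.\ point to a vertex not in $\mathcal T$. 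Summing this deficiency over the leaves on one branch, and using that each outside vertex can absorb at most... — this is where I must be careful — at most $k$ such edges, gives at least $(k-1)^{(g-1)/2} - \lambda$ new vertices in the odd case (no division by $k$ is needed because one tracks a single branch, whose $\lambda$-deficiency per branch already equals $(k-1)^{(g-1)/2}-\lambda$ and the missing edges from that one branch land on distinct outside vertices). For the even case the same idea applies to the double tree, but now the relevant count of ``missing'' edges is $2\bigl((k-1)^{g/2}-\lambda\bigr)$ — a factor $2$ from the two sides of $e$ — and these missing edges are distributed among outside vertices each receiving at most $k$ of them, yielding the ceiling $\lceil 2((k-1)^{g/2}-\lambda)/k\rceil$ extra vertices. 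Adding $n_0(k,g)$ gives the bound.

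The main obstacle I anticipate is making the ``missing edge $\Rightarrow$ new vertex, each new vertex absorbs $\le k$ missing edges'' bookkeeping airtight, especially avoiding double-counting: an edge leaving $\mathcal T$ from a leaf $\ell$ could in principle come back to a vertex that is itself forced to exist by a different leaf $\ell'$, and in the even case a forced vertex might be reachable from both sides of $e$. The clean way to organize this is to define the set $S$ of vertices at distance exactly $(g-1)/2$ from $v$ (resp.\ at distance exactly $g/2$ from the nearer endpoint of $e$, using the double-tree metric) that are \emph{not} in $\mathcal T$, show $|V(\mathcal G)| \ge n_0(k,g) + |S|$, and then bound $|S|$ from below by a counting argument on the bipartite incidence between leaves of $\mathcal T$ and their non-tree neighbours, where the degree constraint on the $S$-side is exactly $k$. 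The hypothesis $\lambda \le (k-1)^{(g-1)/2}$ (resp.\ $\lambda\le(k-1)^{g/2}$) is precisely what guarantees the deficiency $(k-1)^{(g-1)/2}-\lambda$ is nonnegative so the bound is not vacuous. I would also remark that equality forces rigidity (every non-tree vertex has all $k$ of its edges going to leaves of $\mathcal T$), which is the structural fact exploited later to prove the constructions in Sections~\ref{sub1} and \ref{sub2} are extremal.
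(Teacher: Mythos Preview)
The paper does not contain a proof of this theorem: it is quoted verbatim from \cite{DFJR21} and simply used as a tool in Section~\ref{extremalegrg}. So there is no ``paper's own proof'' to compare your attempt against.

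That said, your outline is essentially the argument one finds in \cite{DFJR21}, and the core idea is sound: grow the Moore tree $\mathcal T$, identify the $g$-cycles through a fixed edge with cross-edges between leaves of $\mathcal T$, and show that the shortfall $(k-1)^{\lfloor g/2\rfloor}-\lambda$ in cross-edges forces that many ``exit'' edges to vertices outside $\mathcal T$. A few small corrections are worth making before you write it up cleanly. First, in the odd case you should root at an \emph{edge} $vw$ (since $\lambda$ counts $g$-cycles through an edge, not a vertex): the $w$-branch has $(k-1)^{(g-3)/2}$ leaves, not $(k-1)^{(g-1)/2}$, but each leaf has $k-1$ non-parent edges, so the total number of non-parent edges leaving the $w$-branch is indeed $(k-1)^{(g-1)/2}$; exactly $\lambda$ of these go to leaves of other branches, and the remaining $(k-1)^{(g-1)/2}-\lambda$ go to vertices outside $\mathcal T$. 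The claim that these exit edges land on \emph{distinct} outside vertices is correct and is the heart of the odd case: if two leaves $\ell_1,\ell_2$ of the $w$-branch shared an outside neighbour $u$, the path $\ell_1\,u\,\ell_2$ together with the tree path $\ell_1\cdots\ell_2$ (length at most $g-3$) would give a cycle of length at most $g-1$. Second, in the even case you correctly write $2\bigl((k-1)^{g/2}-\lambda\bigr)$ for the total number of exit edges; note that the exponent $\tfrac{g-1}{2}$ appearing in the displayed formula~\eqref{lowercages} of the paper for even $g$ is a typographical slip and should read $\tfrac{g}{2}$, consistent with the hypothesis $\lambda\le(k-1)^{g/2}$ and with your count. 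The division by $k$ is necessary here precisely because (unlike in the odd case) an outside vertex \emph{can} be adjacent to leaves on both sides of the double tree without creating a short cycle, so the best one can say is that each outside vertex absorbs at most $k$ exit edges.
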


 Moreover, in the same paper, the authors improve the lower bound for bipartite graphs and give the following Theorem: 
\begin{theorem}\cite{DFJR21}\label{extremalbipartitebound}
Let $k\geq 3$ and $g\geq 4$ be even, and let $\lambda\leq (k-1)^{\frac{g-1}{2}}$. If $\mathcal G$ is a bipartite $egr(v,k,g,\lambda)$-graph, then: 

$$v  \geq n_0(k,g) + 2 \lceil  \frac{(k-1)^{\frac{g-1}{2}}- \lambda}{k}\rceil$$
 
 \end{theorem}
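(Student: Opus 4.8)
The plan is to run the standard ``excess from a spanning double-tree'' argument behind the Moore bound and then to gain an extra factor of two from the bipartiteness hypothesis; this is essentially the even case of Theorem~\ref{extremalbound} together with one further observation. Since $\mathcal G$ is an $egr(v,k,g,\lambda)$-graph, every edge lies on exactly $\lambda$ cycles of length $g$, so I may fix one edge $e = uv$ and argue with it. First I would build the breadth-first double-tree $T$ rooted at $e$: on $u$'s side, the tree of depth $\frac g2 - 1$ whose levels $0,1,\dots,\frac g2-1$ contain $1,(k-1),\dots,(k-1)^{\frac g2-1}$ vertices, and symmetrically on $v$'s side, with $u$ adjacent to $v$. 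Because $\mathcal G$ has girth $g$, the usual short-cycle argument shows that all $2\sum_{i=0}^{\frac g2-1}(k-1)^i = n_0(k,g)$ vertices of $T$ are pairwise distinct. The structural claim I would then establish is that the $g$-cycles through $e$ are in bijection with the edges of $\mathcal G$ joining a leaf of $u$'s subtree to a leaf of $v$'s subtree, where a \emph{leaf} means a vertex of $T$ at depth $\frac g2-1$: a $g$-cycle through $e$ is $e$ together with a $u$--$v$ path of length $g-1$, whose first $\frac g2-1$ interior vertices form the unique geodesic out of $u$ to its last vertex (hence lie in $u$'s subtree, ending at a leaf), whose last $\frac g2-1$ interior vertices similarly lie in $v$'s subtree, and whose middle edge joins these two leaves; conversely, such a ``closing edge'' together with the two unique tree-paths and $e$ is a $g$-cycle. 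Thus $\lambda$ equals the number of closing edges.

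Next I would count the edges that leave the leaf layer on $u$'s side. There are $(k-1)^{\frac g2-1}$ such leaves, each having $k-1$ neighbours besides its parent, which gives $(k-1)^{\frac g2}$ edges. By the girth condition, a non-parent neighbour of a $u$-leaf cannot be any vertex of $T$ other than a leaf of $v$'s subtree, since any other option would create a cycle of length smaller than $g$; hence exactly $\lambda$ of these edges close up to $v$-leaves while the remaining $(k-1)^{\frac g2}-\lambda$ land on vertices outside $T$. As every vertex outside $T$ has degree $k$, the number of distinct vertices outside $T$ adjacent to some $u$-leaf is at least $\lceil ((k-1)^{\frac g2}-\lambda)/k \rceil$; the same lower bound holds, by symmetry, for the vertices outside $T$ adjacent to some $v$-leaf.

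The point at which I would invoke that $\mathcal G$ is bipartite, say with parts $A \ni u$ and $B \ni v$, is to show these two sets of outside vertices are disjoint. The $u$-leaves and the $v$-leaves sit at the same depth $\frac g2-1$ from the adjacent (hence oppositely coloured) vertices $u$ and $v$, so they lie in opposite parts, and therefore so do their neighbours outside $T$; equivalently, a vertex adjacent both to a $u$-leaf and to a $v$-leaf would close, together with the tree-paths and $e$, a cycle of odd length $g+1$, which is impossible in a bipartite graph. Consequently $\mathcal G$ has at least $2\lceil ((k-1)^{\frac g2}-\lambda)/k\rceil$ vertices outside $T$, and adding $|V(T)| = n_0(k,g)$ yields $v \ge n_0(k,g) + 2\lceil ((k-1)^{\frac g2}-\lambda)/k\rceil$, as claimed. (Without bipartiteness one can only pool all $2((k-1)^{\frac g2}-\lambda)$ outward edges into a common pool absorbed $k$ at a time, which recovers the weaker even-girth bound $v \ge n_0(k,g) + \lceil 2((k-1)^{\frac g2}-\lambda)/k\rceil$ of Theorem~\ref{extremalbound}.)

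I expect the main obstacle to be making the cycle bookkeeping of the first two paragraphs completely rigorous: one has to verify that the correspondence between $g$-cycles through $e$ and closing edges is a genuine bijection --- using that geodesics of length $\frac g2-1$ out of $u$ are unique, and that the two leaves of a closing edge are always distinct --- and that each ``short-cycle exclusion'' really does hold, with particular care in small cases such as $g=4$, where $T$ has depth one and the leaf layer consists of the neighbours of $u$ (resp.\ $v$). The parity argument of the third paragraph is brief, but it is exactly where the bipartite hypothesis enters, so it should be written out carefully rather than waved through.
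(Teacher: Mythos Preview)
The paper does not give its own proof of this theorem: it is quoted verbatim from~\cite{DFJR21} and then applied. So there is nothing in the paper to compare your argument against; your proposal is, in effect, a reconstruction of the original proof from~\cite{DFJR21}, and the double-tree/excess argument you outline is exactly the standard one.

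Your argument is correct. One small point worth flagging: the exponent $\tfrac{g-1}{2}$ in the statement (both in the hypothesis $\lambda\le (k-1)^{(g-1)/2}$ and in the bound) is a typographical slip carried over in the paper --- for even $g$ it should read $\tfrac{g}{2}$, as you implicitly use throughout. Indeed, the paper's own application to $\mathcal B_q$ just below computes $(k-1)^{g/2}-\lambda=(q-1)^3-(q-1)^2(q-2)=(q-1)^2$, confirming that the intended exponent is $g/2$. Your write-up silently corrects this, which is the right thing to do, but you may wish to remark on it explicitly.
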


We now use Theorem \ref{extremalbipartitebound} and the fact that $\mathcal B_q$ is a bipartite graph to prove the following theorem. 

\begin{theorem}
For $q$ a prime power, 
the graph $\mathcal B_q$ defined in Remark~\ref{remark1} is 
extremal.
\end{theorem}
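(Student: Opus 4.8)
The plan is to apply Theorem~\ref{extremalbipartitebound} with the specific parameters of $\mathcal B_q$, namely $k = q$, $g = 6$, and $\lambda = (q-1)^2(q-2)$, and show that the resulting lower bound on $v$ equals $2q^2$, which is exactly the order of $\mathcal B_q$ as established in Theorem~\ref{B_q}. Since $\mathcal B_q$ is bipartite (it is the incidence graph of the biaffine plane, hence has a natural bipartition into $\mathscr P$ and $\mathscr L$), the hypotheses of Theorem~\ref{extremalbipartitebound} apply provided $\lambda \leq (k-1)^{(g-1)/2}$; here $(k-1)^{(g-1)/2} = (q-1)^{5/2}$ is not an integer, so one should instead use the even-$g$ form $\lambda \leq (k-1)^{g/2} = (q-1)^3$, and indeed $(q-1)^2(q-2) < (q-1)^3$, so the hypothesis is satisfied. (I would double-check which exponent convention the cited paper intends for the bipartite bound; in any case $(q-1)^2(q-2)$ is comfortably below both $(q-1)^{5/2}$ rounded down and $(q-1)^3$ for $q \geq 3$.)

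The main computation is then to evaluate the right-hand side of the bound. First, $n_0(q,6) = \frac{2(q-1)^3 - 2}{q-2}$; expanding $(q-1)^3 = q^3 - 3q^2 + 3q - 1$ gives $2(q-1)^3 - 2 = 2q^3 - 6q^2 + 6q - 4 = 2(q-2)(q^2 - 2q + 1) = 2(q-2)(q-1)^2$, so $n_0(q,6) = 2(q-1)^2$. Next I would compute the ceiling term $2\left\lceil \frac{(q-1)^{(g-1)/2} - \lambda}{q} \right\rceil$; here one must decide the correct reading of the numerator. Using the even-girth analogue $(k-1)^{g/2 - 1}$ (which appears in the Moore-bound structure for even $g$, counting leaves of one of the two trees) one gets numerator $(q-1)^2 - (q-1)^2(q-2) = (q-1)^2(1 - (q-2)) = (q-1)^2(3-q)$, which is nonpositive for $q \geq 3$ — so the ceiling contributes $0$ and the bound reads $v \geq 2(q-1)^2$, which is strictly less than $2q^2$ and hence too weak. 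This tells me the intended numerator must be $(q-1)^{(g-1)/2}$ read as $(q-1)^2 \cdot (q-1) = (q-1)^3$ is also wrong in sign; the right quantity to subtract $\lambda$ from so that the arithmetic closes must be $(q-1)^3 - (q-1)^2(q-2) = (q-1)^2$, giving $2\lceil (q-1)^2 / q \rceil$. For this to equal $2q^2 - 2(q-1)^2 = 4q - 2$ we would need $\lceil (q-1)^2/q \rceil = 2q - 1$, which is false. So the clean identity I actually expect is obtained by taking the numerator to be the number of vertices at maximal distance that an edge's $g$-cycles must reach — I would recompute carefully: with $k = q$, $2\lceil \frac{(q-1)^{(g-1)/2} - \lambda}{k}\rceil$ where $(q-1)^{(g-1)/2}$ in the even case should be the product giving $(q-1)^3$ only if the bound is stated with exponent $\lceil (g-1)/2 \rceil$. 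Then numerator $= (q-1)^3 - (q-1)^2(q-2) = (q-1)^2$ and $2\lceil (q-1)^2/q\rceil = 2\lceil q - 2 + 1/q \rceil = 2(q-1)$, giving $v \geq 2(q-1)^2 + 2(q-1) = 2(q-1)q = 2q^2 - 2q$, still short by $2q$.

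Since none of the naive readings immediately yields $2q^2$, the real content of the proof — and the step I expect to be the main obstacle — is pinning down the exact form of the bound in Theorem~\ref{extremalbipartitebound} as stated in~\cite{DFJR21} and verifying that, with $\lambda = (q-1)^2(q-2)$, it evaluates to precisely $2q^2$; this likely requires using the \emph{sharp} bipartite bound with the correct exponent $(k-1)^{(g-2)/2} \cdot (k-1)$ or an additional structural observation that $\mathcal B_q$'s bipartition classes each have size $q^2$, which forces $v$ to be a multiple of $2q^2$ in a suitable range once the generic bound rules out everything smaller. Concretely, I would: (1) instantiate the bound to get $v \geq 2(q-1)^2 + 2(q-1)$ (or whatever the precise evaluation gives); (2) note this already forces $v > 2(q-1)^2 = n_0(q,6)$, so $\mathcal B_q$ is not a Moore cage but the bound alone may not reach $2q^2$; (3) if there is a gap, close it by invoking that any $egr(v, q, 6, (q-1)^2(q-2))$ with $v < 2q^2$ would have each edge in nearly the maximum possible number of $6$-cycles, and argue via a counting/averaging argument over the local tree structure — essentially that achieving $\lambda = (q-1)^2(q-2)$ (one less than the "full" $(q-1)^3$ in the first coordinate) forces the girth-$6$ neighbourhood to be that of the biaffine plane, pinning $v = 2q^2$. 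Then Theorem~\ref{B_q} exhibits a graph meeting this, so $n(q, 6, (q-1)^2(q-2)) = 2q^2$ and $\mathcal B_q$ is extremal. For $q = 3$ this recovers the Pappus graph on $18$ vertices, consistent with~\cite{DFJR21}.
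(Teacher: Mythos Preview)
Your overall approach---apply Theorem~\ref{extremalbipartitebound} to the bipartite graph $\mathcal B_q$ with $k=q$, $g=6$, $\lambda=(q-1)^2(q-2)$ and check that the resulting lower bound equals $2q^2$---is exactly what the paper does. The reason your computation did not close is not a deficiency of the method but two concrete slips.

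First, your value for $n_0(q,6)$ is wrong. You factor $2(q-1)^3-2$ as $2(q-2)(q-1)^2$, but with $x=q-1$ one has $x^3-1=(x-1)(x^2+x+1)=(q-2)(q^2-q+1)$, so
\[
n_0(q,6)=\frac{2(q-1)^3-2}{q-2}=2(q^2-q+1),
\]
not $2(q-1)^2$. This error of $2(q-1)$ is precisely the ``gap'' you then try to close by structural arguments.

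Second, the exponent in the ceiling term for even $g$ is $g/2$, not $(g-1)/2$ (the paper's statement of Theorem~\ref{extremalbipartitebound} is imprecise, but its proof uses $g/2$). With exponent $g/2=3$ the numerator is $(q-1)^3-(q-1)^2(q-2)=(q-1)^2$, and since $(q-1)^2=q(q-2)+1$ we get $\lceil(q-1)^2/q\rceil=q-1$. Hence
\[
v\ \geq\ 2(q^2-q+1)+2(q-1)=2q^2,
\]
which matches the order of $\mathcal B_q$ on the nose. No additional averaging or local-structure argument is needed; once the arithmetic is corrected, Theorem~\ref{extremalbipartitebound} alone gives extremality.
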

\begin{proof}
Let $\mathcal B_q$ be the graph constructed in Remark~\ref{remark1}.
As pointed out in Theorem~\ref{B_q}, this graph is an $egr(2q^2,q,6,(q-1)^2(q-2))$.
Theorem~\ref{extremalbipartitebound} gives that
$$n(q,6,(q-1)^{2}(q-2))\geq 2(q^2-q+1)+2\lceil\frac{(q-1)^2}{q}\rceil=2q^2.$$
The order of $\mathcal B_q$ is exactly $2q^2$. Hence it is an extremal edge-girth-regular graph. 
\end{proof}

Observe that the graph $\mathcal B_3$, that is an $egr(18,3,6,4)$, is the Pappus Graph.

Let us come back to the special $egr(32,5,5,12)$-graph, constructed in Section~\ref{sub2}.
The bound given in Theorem \ref{extremalbound} implies that  $n(5,5,12)\geq 30$ and therefore the $egr(32,5,5,12)$-graph has excess at most $2$, but as we said in the introduction, the order of a $(5,5)$-cage is exactly $30$, as $n_0(5,5)=26$, if $\mathcal H$ is a $(5,5)$-cage, $\mathcal H$ has excess $4$, and there exist four different cages with excess $4$ (see \cite{EJ13}). Then, to make sure that our construction gives an extremal graph we have to prove that none of the four $(5,5)$-cages are edge-girth-regular-graphs and that there is no $(5,5)$-graph or order $31$.


\begin{theorem}
The $egr(32,5,5,12)$-graph defined in Section~\ref{sub2} is extremal.
\end{theorem}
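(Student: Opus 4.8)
The plan is to combine the general lower bound of Theorem~\ref{extremalbound} with a finite case analysis ruling out all graphs of smaller order. By Theorem~\ref{extremalbound} we have $n(5,5,12) \geq n_0(5,5) + (k-1)^{(g-1)/2} - \lambda = 26 + 16 - 12 = 30$, so any $egr(v,5,5,12)$ with $v < 32$ must have $v \in \{30, 31\}$. Hence it suffices to establish two facts: (i) none of the four $(5,5)$-cages (the $5$-regular graphs of girth $5$ on $30$ vertices) is edge-girth-regular with $\lambda = 12$; and (ii) there is no $egr(31,5,5,12)$.

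For (i), I would invoke the classification of $(5,5)$-cages (there are exactly four, listed in~\cite{EJ13}), load each of them into {\sc Magma}, and run the same brute-force pentagon-counting routine described in Section~\ref{sub2} (the one provided in the appendix). For each cage one checks the multiset of pentagon-counts over all $75$ edges; the point is that in none of them is this count the constant $12$ on every edge. (In fact one expects these cages either not to be edge-girth-regular at all, or to have a different value of $\lambda$.) This is a purely computational verification on four explicit small graphs. For (ii), a graph of order $31$ would have $31 \cdot 5$ odd, which is impossible for a $5$-regular graph by the handshake lemma; so there is simply no $5$-regular graph on $31$ vertices, edge-girth-regular or not. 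Combining (i) and (ii) with the bound $n(5,5,12) \geq 30$, we conclude $n(5,5,12) = 32$, and since the graph $\mathcal H$ of Section~\ref{sub2} is an $egr(32,5,5,12)$ by Theorem~\ref{thm:egrcage}, it is extremal.

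The main obstacle is step (i): it rests on the completeness of the known classification of $(5,5)$-cages and on a finite machine computation rather than a structural argument. I would make the computation reproducible by giving {\sc Magma} code (or adjacency data) for the four cages, as was done for $\mathcal H$ itself, and by reusing verbatim the pentagon-counting function from the appendix. One should also double-check that ``$\lambda = 12$'' genuinely fails for every edge of each cage and not merely on average, i.e. report the full edge-wise distribution of pentagon counts; if some cage happened to be edge-girth-regular with a different $\lambda$ this would not affect the conclusion, but it is worth noting explicitly. Everything else — the arithmetic in the lower bound and the parity obstruction at order $31$ — is immediate.
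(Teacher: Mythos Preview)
Your proposal is correct and follows essentially the same approach as the paper: reduce to $v\in\{30,31\}$ via the lower bound, eliminate $v=31$ by the handshaking lemma, and eliminate $v=30$ by a {\sc Magma} check on the four known $(5,5)$-cages. The paper additionally reports the actual edge-wise pentagon-count distributions for the four cages (namely $\{12,14\}$, $\{12,13,14\}$, $\{12,14\}$, and $\{12,13,16\}$ in Brouwer's ordering), confirming that none is edge-girth-regular at all.
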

\begin{proof}
There are four Moore cages of type (5,5) and order 30. These are the only possible graphs that would have a smaller order than the $egr(32,5,5,12)$-graph and therefore make it not extremal. These graphs are available for instance from Andries Brouwer's website (see \url{https://www.win.tue.nl/~aeb/graphs/cages/cages.html}).
An easy and quick computation with {\sc Magma}~\cite{magma} then permits to check that none of these four graphs are egr-graphs. One way to do this is to use the {\sc Magma} function we provide in the appendix at the end of the paper. Following Brouwer's website ordering, the first and third graphs have edges that belong to 12 pentagons and edges that belong to 14 pentagons, the second has edges that belong to 12 pentagons, edges that belong to 13 pentagons and edges that belong to 14 pentagons and, finally, the fourth has edges that belong to 12 pentagons, edges that belong to 13 pentagons and edges that belong to 16 pentagons.

The fact that there is no $(5,5)$-graph of order 31 is due to the fact that the degree of each vertex is 5 and therefore the order has to be even by the handshaking lemma (the number of vertices times 5 is equal to the number of edges times 2).
\end{proof}

Finally we note that, as $n(q,5,q-1)\geq 2q^2-3q+3$ the graphs constructed in Section \ref{egrgsg} have excess at most $q^3-2q^2+4q-3$ if the Conjecture \ref{lambda=q-1} is true and $\lambda=q-1$. In particular, it is true for $q=8$.

We conclude this paper with two conjectures.

Taking into account the fact that edge-regular-graphs should have a lot of symmetries, that the family of graphs given on Remark~\ref{remark1} induces extremal edge-regular-graphs and that there exists a generalization of the construction of $\mathcal B_q$ for girth $g=\{8,12\}$, given in \cite{AMGS07}, using the incidence graphs of generalized quadrangles and hexagons (see \cite{BBR18}), or the $(k,g)$-Moore cages for $g=\{8,12\}$, give us $(k,g)$-graphs for $k$ a prime power and order $2k^{\frac{g-2}{2}}$, we conjecture the following. 

\begin{conjecture}\label{8y12}
For $q\geq 3$ a prime power and $g=\{8,12\}$ there exist a family of $egr(2q^{\frac{g-2}{2}},q,g,(q-1)^{\frac{g-2}{2}}(q-2))$-graphs.
\end{conjecture}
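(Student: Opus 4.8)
The plan is to follow the proof of Theorem~\ref{B_q}, replacing the incidence graph of a projective plane by that of a generalized quadrangle when $g=8$ and of a generalized hexagon when $g=12$. Concretely, for a prime power $q\ge 3$ let $\Delta$ be a generalized $(g/2)$-gon of order $(q,q)$ --- one may take the symplectic quadrangle $W(q)$ for $g=8$ and the split Cayley hexagon $H(q)$ for $g=12$, both of which exist for every prime power $q$ --- and let $\Gamma_g$ be its point-line incidence graph, which is the $(q+1)$-regular Moore cage of girth $g$. Fix an incident point-line pair $(p,\ell)$ of $\Delta$ and define $\mathcal B_q^{(g)}$ to be the graph obtained from $\Gamma_g$ by deleting every vertex at distance at most $g/2-2$ from $p$ together with every vertex at distance at most $g/2-2$ from $\ell$. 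This is the family of graphs of~\cite{AMGS07}, and for $g=6$ it is exactly the passage from the projective plane to the biaffine plane $\mathcal B_q$ of Remark~\ref{remark1}, which is what leads us to expect the same phenomenon for $g\in\{8,12\}$.

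First I would settle the easy parameters. Since balls of radius less than $g/2$ in $\Gamma_g$ are trees, counting the deleted vertices gives $|V(\mathcal B_q^{(g)})|=2q^{(g-2)/2}$, evenly split between points and lines. A surviving point lies on $q+1$ lines of $\Delta$, of which exactly one is deleted, so every surviving point has degree $q$, and dually for lines; thus $\mathcal B_q^{(g)}$ is $q$-regular. Since it is obtained from a graph of girth $g$ by removing vertices only, its girth is at least $g$, and that it is exactly $g$ will follow from the cycle count below.

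The heart of the matter, exactly as in Theorem~\ref{B_q}, is to count the $g$-cycles through a fixed edge. A convenient reformulation is that the $g$-cycles of $\Gamma_g$ through a fixed edge are precisely the ordinary $(g/2)$-gons (apartments) of $\Delta$ containing the corresponding flag, and there are $q^{g/2}$ of them; such an apartment survives in $\mathcal B_q^{(g)}$ exactly when all of its $g/2$ points avoid the deleted ball around $p$ and all of its $g/2$ lines avoid the deleted ball around $\ell$. So the task is to prove that, for a surviving flag $(u_1,u_2)$, exactly $(q-1)^{(g-2)/2}(q-2)=(q-1)^{g/2-1}(q-2)$ of the $q^{g/2}$ apartments through it are ``far'' from $p$ and $\ell$ in this sense. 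Equivalently, in the style of Theorem~\ref{B_q}, one grows a path $(u_1,u_2,u_3,\dots)$ greedily inside $\mathcal B_q^{(g)}$: girth $\ge g$ forces every partial walk to be a path and $q$-regularity gives $q-1$ choices at each step, while once $d(u_1,\cdot)$ reaches the diameter of $\Delta$ the only way to close a cycle of length $g$ is to run back along the (unique) geodesic towards $u_1$, so the last $g/2-2$ vertices of the cycle are forced as soon as the last free vertex is chosen. There are exactly $g/2$ free choices, and one must show that among their $q-1$ candidates a single ``bad'' choice occurs in total --- either because a forced closing vertex does not exist in $\Delta$ (the analogue of choosing $u_5$ in the parallel class $P_x$ of $u_1$ in the proof of Theorem~\ref{B_q}) or because a forced closing vertex is swallowed by the deletion around $p$ or $\ell$ --- which leaves the product $(q-1)^{g/2-1}(q-2)$.

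I expect this last step to be the genuine obstacle, since it is where the geometry of the truncated polygon must be controlled. It amounts to establishing the analogue of Remark~\ref{remark1}(iii) --- a description of which surviving elements sit at the extremal distances of $\mathcal B_q^{(g)}$ --- together with a proof that the forced part of a generic $g$-cycle based at $(u_1,u_2)$ stays clear of both deleted balls for all but exactly one of the critical choices. For $g=8$ this should be a manageable computation with the classical axioms of generalized quadrangles; for $g=12$ the interplay between the radius-$4$ deletion and the length-$12$ cycles is substantially more intricate, and it may be cleanest to carry it out either in explicit coordinates for $W(q)$ and $H(q)$, or by exhibiting $\mathcal B_q^{(g)}$ as the rank-two truncation of a rank-three coset geometry whose elements of the third type are exactly the $g$-cycles, in the spirit of the Suzuki construction of Section~\ref{egrgsg}.
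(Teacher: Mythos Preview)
The statement you are addressing is labelled a \emph{Conjecture} in the paper, and the paper offers no proof of it; it merely motivates the conjecture by pointing to the construction in~\cite{AMGS07} of $q$-regular graphs of order $2q^{(g-2)/2}$ obtained from generalized quadrangles and hexagons. There is therefore nothing in the paper to compare your argument with: the authors do not claim to know how to carry out the cycle count you are attempting.

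Your outline is a reasonable programme and correctly identifies the graphs involved, their order, regularity and the lower bound on the girth. However, it is not a proof, and you say as much yourself. The crux is the assertion that among the $g/2$ ``free'' steps in building a $g$-cycle through a fixed edge, exactly one step suffers a single forbidden option (giving the factor $q-2$) while all others retain $q-1$ options. For $g=6$ this is exactly Remark~\ref{remark1}(iii), which gives a clean partition of the surviving points and lines into parallel classes. For $g\in\{8,12\}$ you would need an analogue of that remark for the truncated quadrangle and hexagon, together with a verification that the $g/2-2$ ``forced'' closing vertices always exist in the truncated graph and are never swallowed by the deleted balls around $p$ and $\ell$. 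Neither of these facts is established in your write-up, and neither follows formally from the $g=6$ case; the combinatorics of which geodesics pass through the deleted neighbourhoods is precisely what makes the conjecture open. Until that step is completed, what you have is a heuristic in support of the conjecture, not a proof.
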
 
\begin{conjecture}
The family of graphs conjectured in \ref{8y12} are extremal-edge-girth-regular graphs. \end{conjecture}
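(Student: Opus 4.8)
Granting Conjecture~\ref{8y12}, the plan is to reduce the statement to a tight lower bound for $n\big(q,g,(q-1)^{(g-2)/2}(q-2)\big)$ with $g\in\{8,12\}$. Write $k=q$ and $\lambda=(q-1)^{(g-2)/2}(q-2)$. First I would record the closed form of the bipartite Moore bound,
\[
n_0(q,g)=\frac{2\big((q-1)^{g/2}-1\big)}{q-2},
\]
which factors cleanly into $n_0(q,8)=2q(q^2-2q+2)$ and $n_0(q,12)=2q(q^2-q+1)(q^2-3q+3)$. The graph supplied by Conjecture~\ref{8y12} is the $q$-regular, girth-$g$ incidence graph of order $2q^{(g-2)/2}$ built from generalized quadrangles (resp.\ hexagons) as in~\cite{AMGS07,BBR18}; it is bipartite and has the prescribed parameters, so $n(q,g,\lambda)\le 2q^{(g-2)/2}$, and everything comes down to the reverse inequality.

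The first move I would try is to imitate the girth-$6$ argument. Applying Theorem~\ref{extremalbipartitebound} to a hypothetical bipartite $egr(v,q,g,\lambda)$ and using the identity $(q-1)^{g/2}-\lambda=(q-1)^{(g-2)/2}$ gives
\[
v\ \ge\ n_0(q,g)+2\left\lceil\frac{(q-1)^{(g-2)/2}}{q}\right\rceil .
\]
For $g=6$ the right-hand side already equals $2q^2$ (and for $q=3$ it is the Pappus graph, as in Section~\ref{sub1}), so that case is settled. The difficulty is that for $g\in\{8,12\}$ this bound is strictly weaker than needed: for $g=8$ it reads $n_0(q,8)+2(q^2-3q+3)=2q^3-2q^2-2q+6$, whereas the target is $2q^3$. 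The gap arises because Theorem~\ref{extremalbipartitebound} only sees the single layer of excess vertices adjacent to the leaves of the Moore tree, while the biaffine incidence graph carries $2q^{(g-2)/2}-n_0(q,g)$ excess vertices, a quantity of strictly larger order in $q$. Hence the heart of the proof must be a lower bound genuinely stronger than Theorems~\ref{extremalbound}--\ref{extremalbipartitebound}, one exploiting that $\lambda$ is within a factor $\tfrac{q-2}{q-1}$ of the maximum possible value $(q-1)^{g/2}$.

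To produce such a bound I would iterate the breadth-first-search argument underlying Theorem~\ref{extremalbipartitebound}. Fix an edge $e=uv$ of a putative minimal graph. Since the graph has girth $g$, the BFS double tree rooted at $e$ is a perfect Moore tree down to depth $(g-2)/2$, and because $\lambda=(q-1)^{(g-2)/2}(q-2)$, exactly $2\big((q-1)^{g/2}-\lambda\big)=2(q-1)^{(g-2)/2}$ of the half-edges leaving its leaves land on vertices outside the tree. Edge-girth-regularity then applies the same constraint verbatim to each edge joining a leaf to one of these new vertices, pinning down a further layer of forced vertices; the crux is a careful double count showing that consecutive layers overlap little enough that the total vertex count reaches $2q^{(g-2)/2}$. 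I expect this to require a rigidity (``stability'') statement for the incidence graphs of generalized quadrangles and hexagons --- the near-Moore analogue of the structure theory that yields $\mathcal B_q$ in Section~\ref{sub1} --- and this is the step I expect to be the real obstacle; it is precisely why the statement is only conjectured. One also has to rule out non-bipartite competitors of smaller order, for which the available bound (Theorem~\ref{extremalbound}) is no stronger than in the bipartite case; the hope is that near-maximality of $\lambda$ together with the even girth forces enough structure to exclude them.

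As a safeguard, and in the spirit of the {\sc Magma} verification of the $egr(32,5,5,12)$ graph in Section~\ref{sub2}, I would confirm $n(q,g,\lambda)=2q^{(g-2)/2}$ by exhaustive search for the smallest prime powers --- at least $q=3$ and $q=4$, for both $g=8$ and $g=12$ --- both to test the conjecture and to expose the combinatorial mechanism a general proof will have to encode.
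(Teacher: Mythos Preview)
The statement you were asked to prove is a \emph{conjecture} in the paper; the authors give no proof, and indeed the surrounding text makes clear they regard it as open. So there is no ``paper's own proof'' to compare against.

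Your proposal is not a proof either, and you are candid about this. Your analysis is correct on the points that matter: the identity $(q-1)^{g/2}-\lambda=(q-1)^{(g-2)/2}$ holds, the bipartite bound of Theorem~\ref{extremalbipartitebound} applied with this $\lambda$ yields $n_0(q,8)+2(q^2-3q+3)=2q^3-2q^2-2q+6$ for $g=8$ (and an analogous shortfall for $g=12$), and this is strictly below the target $2q^{(g-2)/2}$. You have therefore correctly diagnosed \emph{why} the paper leaves the statement as a conjecture: the only general lower bounds available (Theorems~\ref{extremalbound} and~\ref{extremalbipartitebound}) are too weak by a term of order $q^{(g-2)/2-1}$, and closing that gap would require a new structural argument---essentially a rigidity theorem for near-Moore bipartite graphs of girth $8$ and $12$---that does not currently exist in the literature.

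Two small remarks. First, your suggested iterated BFS argument is a reasonable heuristic, but as stated it has a genuine gap: you assert that ``consecutive layers overlap little enough'' without any mechanism to control that overlap, and this is exactly where the difficulty lies. Second, ruling out non-bipartite competitors is not a side issue; Theorem~\ref{extremalbipartitebound} only bounds bipartite graphs, and Theorem~\ref{extremalbound} for even $g$ is weaker still, so even a successful bipartite argument would leave the conjecture open.

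In short: your write-up is an accurate account of the obstruction, not a proof, and the paper offers nothing more.
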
 

\section{Appendix}
We give here the little piece of {\sc Magma} code we wrote to check whether the graphs we mention in this paper are egr-graphs. This function receives as input a graph and returns either false if the graph is not an egr-graph, or true and the parameters $v$, $k$, $g$, $\lambda$ if the graph is an egr-graph.

\begin{verbatim}
function IsEdgeGirthRegular(gr);
  n:=Order(gr);
  g:=Girth(gr);
  V:=VertexSet(gr);
  E:=EdgeSet(gr);
  c2:= [[i,j] : j in {i+1..n},i in {1..n}| {V!i,V!j} in E];
  if IsRegular(gr) then
    k := #Neighbours(V!1);
    while #c2 ne 0 and #Rep(c2) le g-1 do
      cNext:=[];
      for x in c2 do
        for j := 1 to n do
          if x[#x] ne j and {V!x[#x],V!j} in E and not(j in Set(x)) 
            and (#x lt g-1 or {V!x[1],V!j} in E) then
            Append(~cNext,x cat [j]);  
          end if;
        end for;
      end for;
      c2 := cNext;
    end while;
    c2 := [x cat [x[1]] : x in c2];
    m := {* {y[i],y[i+1]} : i in [1..g], y in c2 *};
    m2 := {Multiplicity(m,x)/g : x in m};
    if #m2 eq 1 then 
      return true, #V, k, g, Rep(m2);
    end if;
  end if;
  return false;
end function; 
\end{verbatim}

\end{document}